\newcommand{\w}{\omega}
\renewcommand{\d}{\ensuremath{\partial}}
\newcommand{\mc}{\mathcal}
\newcommand{\alphlist}{\begin{list}{(\alph{enumi})}{\usecounter{enumi}}}
\newcommand{\romanlist}{\begin{list}{(\roman{enumi})}{\usecounter{enumi}}}
\newcommand{\listend}{\end{list}}
\renewcommand{\:}{\colon}
\renewcommand{\=}{\coloneqq}
\newcommand{\ssq}{\ensuremath{\subseteq}}
\newcommand{\eps}{\ensuremath{\varepsilon}}
\newcommand{\lam}{\ensuremath{\lambda}}
\newcommand{\T}{\ensuremath{\mathbb{T}}}
\newcommand{\N}{\ensuremath{\mathbb{N}}} 
\newcommand{\R}{\ensuremath{\mathbb{R}}}
\newcommand{\Z}{\ensuremath{\mathbb{Z}}}
\newcommand{\Q}{\ensuremath{\mathbb{Q}}}
\newcommand{\kreis}{\ensuremath{\mathbb{T}^{1}}}
\newcommand{\I}{\ensuremath{\mathcal I}}
\newcommand{\A}{\ensuremath{\mathcal A}}
\newcommand{\B}{\ensuremath{\mathcal B}}
\theoremstyle{plain}
\newtheorem{thm}{Theorem}[section]
\newtheorem{lem}[thm]{Lemma}
\newtheorem{prop}[thm]{Proposition}
\newtheorem{cor}[thm]{Corollary}
\theoremstyle{definition}
\newtheorem{defn}[thm]{Definition}
\theoremstyle{remark}
\newtheorem*{rem}{Remark}
\numberwithin{equation}{section}
\numberwithin{thm}{section}
\def\@settitle{\begin{center}%
  \baselineskip14\p@\relax
    \normalfont\bf\LARGE%
  \@title
  \end{center}%
}
\title{\Large\textsc{Rectifiability of a class of invariant measures with one non-vanishing Lyapunov exponent}}
\author{G.~Fuhrmann}
\address{Institute of Mathematics, Friedrich-Schiller-Universit\"at Jena, Germany.}
\email{gabriel.fuhrmann@uni-jena.de}
\author{J.~Wang}
\address{Department of Applied Mathematics, Nanjing University of Science and
Technology, Nanjing 210094, China}
\email{jingwang018@gmail.com}
\thanks{Both authors would like to thank Tobias J\"ager for introducing them to the problem considered in this article. They would further like to thank
Tobias J\"ager and Maik Gr\"oger for related discussions.}
\thanks{J. W. was supported by a research fellowship of the Alexander von Humboldt-Foundation.}
\date{\today}
\begin{document}
\begin{abstract}
  We study order-preserving $\mc C^1$-circle diffeomorphisms driven by irrational \mbox{rotations} with a Diophantine rotation number.
  We show that there is a non-empty open set of one-parameter families of such diffeomorphisms
  where the ergodic measures of nearly all family members are one-rectifiable, that is, 
  absolutely continuous with respect to the restriction of the one-dimensional Hausdorff measure to a countable union of Lipschitz graphs.
\end{abstract}
\maketitle
\section{Introduction}
A fascinating aspect of the theory of dynamical systems is its contribution to the understanding of how complex behaviour and complex structures
originate from simple rules.
One phenomenon which fits perfectly in this category are so-called \emph{strange non-chaotic attractors}:
sets of a ``strange'' geometry which are invariant
and attracting under the dynamics of certain zero-entropy extensions of irrational rotations.
Under fairly general conditions, the strangeness of these invariant sets can be quantified
in terms of a dimension gap of the associated physical measures:
While they are of full support, they are--under mild assumptions--exact dimensional with a pointwise dimension equal to $1$.
The latter follows from a strong result by Ledrappier and Young \cite{LedrappierYoung1985} and is in perfect agreement with a famous conjecture by Yorke et al. \cite{fredericksonkaplanyorke1983}
(see also \cite{Ledrappier1981,Young1982Dimension}).

With this article, we show that in many cases the measures corresponding to strange non-chaotic attractors are in fact one-rectifiable, that is, 
they are absolutely continuous with respect to the restriction 
of the one-dimensional Hausdorff measure to a countable union of Lipschitz graphs (see Section~\ref{sec: rectifiable measures} for 
the exact definition).
Similar results have already been obtained in previous studies \cite{GroegerJaeger2012SNADimensions,fuhrmanngrogerjager14}.
However, the underlying geometric picture of the proof in the present case differs to a large extent and makes the authors believe that
rectifiability should be expected also in more general situations.

Throughout this work, we consider diffeomorphisms homotopic to the identity on $\T^2$ given by skew-products of the form
\begin{align}\tag{$\ast$}\label{eq: defn skew product introduction}
 f\:\T^2\to\T^2, \qquad\quad (\theta,x)\mapsto (\theta+\w, f_{\theta}(x)),
\end{align}
where the forcing frequency $\w\in\T^1\=\R/\Z$ is assumed to be irrational and $\T^2\ni(\theta,x)\mapsto f_{\theta}(x)\in\T^1$ is $\mc C^1$.
Occasionally, we may refer to maps of the form \eqref{eq: defn skew product introduction}
as \emph{quasiperiodically forced (qpf) circle maps}.
We are interested in the \emph{invariant graphs} of such qpf circle maps.
These are defined to be measurable functions $\phi\:\T^1\to\T^1$ such that
\[
 f_\theta(\phi(\theta))=\phi(\theta+\w)
\]
for $\textrm{Leb}_{\T^1}$-almost every $\theta\in \T^1$, where $\textrm{Leb}_{\T^1}$ denotes the Lebesgue measure on $\T^1$.
Note that in this case, the graph $\Phi\=\{(\theta,\phi(\theta))\:\theta\in \T^1\}$--which we always denote by the corresponding capital letter--is
in fact almost surely invariant under $f$, meaning
that there is a full-measure set $\Omega\ssq\T^1$ such that $f(\Phi\cap(\Omega\times \T^1))=\Phi\cap(\Omega\times \T^1)$.
We should remark that--by a slight abuse of terminology--we refer by graph to both the map $\phi$ and the point set $\Phi$.
Observe further that we identify invariant graphs if they coincide $\textrm{Leb}_{\T^1}$-almost surely.

It is natural to ask whether a given invariant graph $\phi$ attracts or repels nearby orbits.
The answer to this question is provided by its \emph{Lyapunov exponent}
\[
 \lam(\phi)\=\int_{\T^1}\! \log|\d_x f_\theta(\phi(\theta))| \,d\theta.
\]
If $\lam(\phi)<0$, the graph is attracting; if $\lam(\phi)>0$, the graph is repelling;
for the details, we refer the readers to \cite[Proposition~3.3]{JagerNonLin}.\footnote{Note that in the case when $f$ is $\mc C^{1+\alpha}$, this follows from Pesin theory
(cf. the supplement in \cite{KatokHasselblatt}).}

The dynamical importance of invariant graphs becomes apparent through their close relation
to the invariant measures of the systems under consideration:
To each invariant graph $\phi$, we can associate a measure $\mu_\phi$
given by
\[
 \mu_\phi(A)=\textrm{Leb}_{\T^1}(\pi_1(A\cap \Phi))
\]
for every Lebesgue-measurable set $A\ssq\T^2$, where $\pi_1$ is the projection to the first coordinate.
It is easy to see that $\mu_{\phi}$ is $f$-\emph{invariant}, that is, $\mu_{\phi}(A)=\mu_\phi(f^{-1}(A))$ for all Lebesgue-measurable sets $A$ and \emph{ergodic}, that is,
$f^{-1}(A)=A$ only if $\mu_{\phi}(A)$ equals $0$ or $1$.
In fact, if $f$ is not \emph{uniquely ergodic} (that is, if there are at least two distinct ergodic measures), the converse of this observation is also true if 
we allow for multi-valued invariant graphs (see \cite[Theorem~4.1]{Furstenberg1961}).

Our goal is to study the geometry of those ergodic measures which are supported on a particular kind of invariant graphs.
\begin{defn}
 We say an invariant graph $\phi\:\T^1\to\T^1$ is a \emph{strange non-chaotic attractor (SNA)} and \emph{repeller (SNR)} if it
 is attracting and repelling, respectively, and if it is
 non-continuous, that is, there is no continuous representative in its equivalence class.
\end{defn}
The above notion goes back to an article by Grebogi et al. from 1984, where numerical evidence and heuristic arguments for the existence
of SNA's are found for a rather particular class of skew-product systems on $\T^1\times\R$ (cf. \cite{grebogi/ott/pelikan/yorke:1984,Keller}).
However, rigorous results establishing the existence of SNA's (at least implicitly) had already been derived before \cite{millionscikov,Vinograd,herman1983}
in the context of certain quasiperiodic $\textrm{SL}(2,\R)$-cocycles, where the presence of SNA's is equivalent to the non-uniform hyperbolicity of the respective cocycle
(for a detailed discussion of this relation, see \cite[Section~1.3.2]{JagerAMS}).
In this setting, Young \cite{young:1997} and Bjerkl\"ov \cite{bjerkloev:2005,bjerkloev:2007} developed powerful methods--in the spirit of the multiscale
analysis and parameter exclusion techniques by Benedicks and Carleson \cite{benedicks/carleson:1991}--to examine the occurrence and properties of SNA's.
These methods had later been adapted to non-linear systems (such as \eqref{eq: defn skew product introduction}) in
\cite{Jager,JagerETDS,fuhrmann2014,fuhrmanngrogerjager14}.

A natural context in which SNA's arise can be found in the study of \emph{mode-locking} phenomena for qpf circle maps \cite{JagerETDS,jagerwang2015}.
Mode-locking (sometimes also referred to as \emph{frequency locking}) is best known as a phenomenon occurring in families $(g_\tau)_{\tau\in[0,1]}$ of continuous orientation-preserving circle maps
and describes the situation in which the rotation number $\rho(\tau)$ (i.e., the average speed by which points move
on $\T^1$ under the dynamics of $g_\tau$ [see, e.g., \cite[Proposition~11.1.1.]{KatokHasselblatt}]) is a
devil's staircase, that is, it is locally constant on an open
and dense subset while it increases continuously from $0$ to $1$ over the unit
interval (cf., e.g., \cite[Proposition~11.1.11.]{KatokHasselblatt}).
The paradigm example for the abundance of mode-locking
is certainly provided by the Arnold circle map
\[
  f_{\alpha,\tau} : \kreis\to\kreis, \qquad
   x\mapsto x+\tau+\frac{\alpha}{2\pi}\sin(2\pi x) \ \bmod 1,
\]
where $[0,1]\ni\tau\mapsto \rho(\tau)$ is a devil's staircase for all $\alpha\in(0,1]$.

The Arnold circle map gives an understanding of frequency-locking phenomena
occurring in a variety of real-world situations ranging from damped
pendula and electronic oscillators
\cite{Ding1986NonlinearOscillators} as well as the heart-beat \cite{arnold:1991}
through to paradoxical neural behaviour
\cite{perkeletal:1964,CoombesBressloff1999ModeLocking}.
Against the background of these applications, it is desirable to study mode-locking in dynamically more complicated situations
than the present one, where instead of the rotation numbers of families of circle maps, the \emph{fibre-wise rotation numbers} of
families of \emph{forced} circle maps of the form \eqref{eq: defn skew product introduction} are considered (see \cite{jagerwang2015,jagerwang2016}).
However, such families naturally yield SNA/SNR-pairs if we assume the forcing frequency $\w$ to be \emph{Diophantine}, that is,
poorly approximable by rational numbers (see Section~\ref{subsec: basic setting and notation}).
\begin{thm}[{\cite[Theorem~3.1]{jager2013}}]\label{t.modelocking}
  Given Diophantine $\omega\in\T^1$ and $\delta>0$, there exists a non-empty $\mc C^1$-open subset
  \[
  \mc U\ssq \left\{(f_\tau)_{\tau\in\T^1}\: \ f_\tau \textrm{ is of the form \eqref{eq: defn skew product introduction}  } \textrm{ and }
  (\tau,\theta,x)\mapsto f_\tau(\theta,x)  \textrm{ is } \mathcal C^1\ \textrm{for all}\ \tau\in\T^1\right\}
  \]
  with the following property.
  For all
  $(f_\tau)_{\tau\in\kreis}\in\mc U$ there is a set $\Lambda\ssq\kreis$
  with $\textrm{Leb}_{\T^1}(\Lambda)\geq1-\delta$ such that
 for all $\tau\in\Lambda$, the map $f_\tau$ has a
  (unique) SNA $\phi_\tau^+$ and SNR $\phi_\tau^-$ and the dynamics of $f_\tau$ are minimal.
\end{thm}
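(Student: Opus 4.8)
Since this statement is borrowed from \cite{jager2013}, I only sketch how I would establish it. The plan is to build $\mc U$ as a $\mc C^1$-neighbourhood of a single, carefully chosen reference family, and to combine a multiscale/parameter-exclusion scheme (producing non-uniform hyperbolicity for most parameters) with a soft topological argument (upgrading non-uniform hyperbolicity to the SNA/SNR-pair and to minimality). A convenient reference family is a forced Arnold-type family
\[
 f_\tau(\theta,x)=x+\tau+\beta\, g(\theta,x)\bmod 1
\]
with a fixed $\mc C^1$ (or real-analytic) function $g$ and a large coupling constant $\beta$; the projectivisation of a strongly coupled quasiperiodic $\mathrm{SL}(2,\R)$-cocycle would serve equally well. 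For every member one has a fibred rotation number $\rho(\tau)$ which is continuous, non-decreasing in $\tau$, and $\mc C^0$-stable under perturbations of the family; its mode-locking plateaus, i.e.\ the maximal intervals on which $\rho$ is constant and $\rho(\tau)\in\Z\w+\Z$, will be kept of total length $<\delta$.

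The core of the argument is the inductive multiscale analysis, carried out along the continued-fraction denominators $q_n$ of $\w$, in the spirit of Benedicks--Carleson \cite{benedicks/carleson:1991} and its quasiperiodic adaptations by Young \cite{young:1997} and Bjerkl\"ov \cite{bjerkloev:2005,bjerkloev:2007} (and, in the non-linear case, \cite{Jager,fuhrmanngrogerjager14}). At scale $n$ one controls the growth of $\d_x f_\theta^{q_n}$ on a dynamically defined set of ``good'' phases, where the Diophantine condition on $\w$ is precisely what rules out small-denominator resonances between the unstable direction and its $q_n$-th iterate. The price is the exclusion, at each step, of a parameter set of measure $\lesssim e^{-c q_{n-1}^{\varsigma}}$; summing these and taking $\beta$ large, the surviving set $\Lambda$ has $\Leb_{\T^1}(\Lambda)\ge 1-\delta$, and for $\tau\in\Lambda$ the map $f_\tau$ carries an invariant graph with positive Lyapunov exponent but is not uniformly hyperbolic. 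Non-uniform hyperbolicity forces the pointwise stable and unstable directions to depend discontinuously on $\theta$, which is exactly the assertion that the corresponding invariant graphs have no continuous representative: one is an SNA, the other an SNR. Uniqueness follows because any further invariant graph would be trapped between the extremal ones and would carry the same fibred rotation number $\rho(\tau)$, contradicting the exponential estimates.

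It remains to obtain minimality of $f_\tau$ on $\T^2$ for $\tau\in\Lambda$ (shrinking $\Lambda$ by at most another $\delta$-small set if necessary). Here I would use that $\rho(\tau)$ is not only irrational but satisfies $\rho(\tau)\notin\Z\w+\Z$, which is guaranteed since the mode-locked parameters were arranged to have total measure $<\delta$ (alternatively, their exclusion is folded into the multiscale step). A qpf circle homeomorphism with Diophantine base, such a ``totally irrational'' fibred rotation number, and no continuous invariant graph cannot admit a proper closed $f$-invariant set projecting onto $\T^1$ other than $\T^2$; together with the semi-continuity of the extremal invariant graphs and the hyperbolic bounds from the previous step, this yields minimality, following the line of argument of \cite[Section~1.3.2]{JagerAMS} and \cite{jagerwang2015}. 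Finally, $\mc C^1$-openness of $\mc U$ is inherited from the construction: the multiscale estimates are finitely many inductive inequalities in $\d_x f_\theta$ and its iterates, backed by uniform a~priori bounds, hence survive small $\mc C^1$-perturbations, and the parameter exclusion uses only open conditions. The main obstacle, as always in this circle of ideas, is the multiscale induction itself: one must propagate sharp exponential growth estimates through infinitely many scales while keeping the excluded parameter sets summable and the mode-locked parameters negligible, all in the merely $\mc C^1$, genuinely non-linear setting, where the analytic machinery available in the cocycle case is absent.
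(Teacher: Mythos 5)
This theorem is not proved in the present paper at all: it is imported verbatim from \cite[Theorem~3.1]{jager2013} (and essentially restated as Proposition~\ref{prop: tobias etds}), so there is no internal proof to compare your sketch against; the comparison below is with the argument in the cited literature that the paper relies on. Your overall architecture---a reference family with strong coupling, a multiscale/parameter-exclusion induction in the spirit of \cite{benedicks/carleson:1991,young:1997,bjerkloev:2005} adapted to the nonlinear $\mc C^1$ setting \cite{Jager}, and $\mc C^1$-openness because the inductive conditions are finitely many open estimates at each scale---is indeed the strategy behind \cite{jager2013}, and that part of your outline is sound as a roadmap.

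However, two of your ``soft'' steps would fail as stated. The serious one is minimality. It is not true that a qpf circle homeomorphism with Diophantine base, totally irrational fibred rotation number and no continuous invariant graph admits no proper closed invariant set projecting onto $\T^1$: the set $\mathrm{cl}(\Phi^+\cup\Phi^-)$ is always closed and invariant, and for systems of exactly this type (e.g.\ projective actions of non-uniformly hyperbolic quasiperiodic cocycles) it can be a proper subset of $\T^2$, so minimality cannot be extracted from total irrationality plus strangeness alone. In \cite{jager2013} minimality for the parameters in $\Lambda$ comes out of the quantitative multiscale estimates themselves (control of the critical regions forcing the invariant graphs to be dense in $\T^2$), not from a classification argument; as written, this step is a genuine gap. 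Secondly, your route to strangeness and uniqueness is phrased in cocycle language that does not transfer directly to families of the form \eqref{eq: defn skew product introduction}: ``pointwise stable and unstable directions depending discontinuously on $\theta$'' has no meaning for a general nonlinear fibre map, and ``any further invariant graph would be trapped between the extremal ones'' presupposes an order on the fibre, which $\T^1$ lacks. The standard nonlinear argument instead shows that the estimates yield exactly two ergodic measures, carried by graphs with negative resp.\ positive Lyapunov exponent, and that a totally irrational fibred rotation number (or the explicit geometry of the critical sets) rules out continuous invariant graphs, whence the SNA/SNR pair and its uniqueness. These latter points are repairable with the correct references, but the minimality argument needs the hard quantitative input, not the topological shortcut you propose.
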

It is not only, but in particular, the situation of the last statement in which we describe the geometry of the ergodic measures
associated to the SNA $\phi^+$ and SNR $\phi^-$, respectively.
This description yields that the measures $\mu_{\phi^+}$ and $\mu_{\phi^-}$ are $1$-rectifiable, that is,
they are absolutely continuous with respect to the restriction of the $1$-dimensional Hausdorff-measure (on $\T^2$)
to a countable union of Lipschitz graphs
(see Section~\ref{sec: rectifiable measures} for the details).
In this terms, our main result reads as follows (see Theorem~\ref{prop: dimensions_subgraphs} for the full statement).
\begin{thm}\label{thm: main introduction}
  Given Diophantine $\omega\in\T^1$, $\delta>0$ and a family of qpf circle maps $(f_\tau)_{\tau\in\T^1}\in\mc U$, consider $f_\tau$
  for a parameter $\tau\in \Lambda$, where $\mc U$ and $\Lambda$ are as in Theorem~\ref{t.modelocking}.
  Then $\mu_{\phi_\tau^+}$ and $\mu_{\phi_\tau^-}$ are one-rectifiable.
\end{thm}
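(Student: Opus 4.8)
The plan is to reduce the assertion to a geometric statement about the graph $\Phi\=\Phi_\tau^+$ of $\phi\=\phi_\tau^+$, and then to establish that statement from the negativity of the fibre-wise Lyapunov exponent together with the quantitative structure of the families in $\mc U$. We only discuss $\mu\=\mu_\phi$: since $f_\tau^{-1}$ is again a qpf circle diffeomorphism (over the rotation by $-\w$, still Diophantine) for which $\phi_\tau^-$ is an invariant graph with Lyapunov exponent $-\lam(\phi_\tau^-)<0$, the very same argument will settle $\mu_{\phi_\tau^-}$. Write $a(\theta)\=\log|\d_x f_\theta(\phi(\theta))|$, so that $\int a\,d\Leb_{\T^1}=\lam(\phi)<0$.

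\emph{Reduction.} As $\mu$ is a graph measure, $\mu\le\mc H^1|_\Phi$ via the $1$-Lipschitz projection to the first coordinate; in particular $\mu\ll\mc H^1$ automatically, so by the definition of one-rectifiability (Section~\ref{sec: rectifiable measures}) it suffices to cover $\Phi$, up to a $\mu$-null set, by countably many Lipschitz graphs. This reduces to exhibiting a \emph{single} Borel set $G\ssq\T^2$ with $\mu(G)>0$ such that $\Phi\cap G$ lies, up to a $\mu$-null set, in one Lipschitz graph $\Psi$: indeed, $\mu$ being $f$-invariant and ergodic, $\mu$-a.e.\ point of $\Phi$ visits $G$ under forward iteration, hence belongs to $f^{-n}(\Phi\cap G)$ for some $n\ge0$; and each $f^{-n}$ is a $\mc C^1$-diffeomorphism of $\T^2$ respecting the product structure, so it maps Lipschitz graphs to Lipschitz graphs and (being $\mu$-preserving) $\mu$-null sets to $\mu$-null sets. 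Thus the countably many graphs $f^{-n}(\Psi)$, $n\ge0$, cover $\Phi$ up to a $\mu$-null set; their Lipschitz constants may blow up with $n$, which does no harm.

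\emph{Constructing the good set.} This is the crux. The first ingredient is that $\lam(\phi)<0$ forces the formal derivative of $\phi$ to be bounded: the cocycle equation $g(\theta+\w)=\d_\theta f_\theta(\phi(\theta))+\d_x f_\theta(\phi(\theta))\,g(\theta)$ has the a.e.\ absolutely convergent solution $g(\theta)=\sum_{k\ge1}e^{S_{k-1}a(\theta)}\,\d_\theta f_{\theta-k\w}(\phi(\theta-k\w))$ (with $S_ja(\theta)\=\sum_{i=1}^j a(\theta-i\w)$), and $\|g\|_\infty<\infty$ because $e^{S_ja}$ decays exponentially for a.e.\ $\theta$ and $\|\d_\theta f\|_\infty<\infty$. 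The second ingredient is the quantitative input from the construction underlying Theorem~\ref{t.modelocking}: the fibre maps contract strongly away from a small forcing region, which gives uniform contraction of the fibre cocycle along orbit segments avoiding that region, together with matching control on the short passages through it. Using Lusin's and Egorov's theorems one passes to a compact $K\ssq\T^1$ of measure close to $1$ on which $\phi$ is continuous, the partial sums for $g$ converge uniformly, and $\tfrac1n S_n a\to\lam(\phi)$ uniformly; the target is then to show that on a positive-measure subset of $K$ the graph $\Phi$ coincides with a Lipschitz graph, upon which one takes $G$ to be that subset times $\T^1$ and $\Psi$ a Lipschitz extension of the corresponding restriction of $\phi$.

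\emph{The main obstacle.} This last Lipschitz estimate is the hard part, and the point where the geometric picture departs most from the earlier rectifiability results of \cite{GroegerJaeger2012SNADimensions,fuhrmanngrogerjager14}. To bound an increment $\phi(\theta)-\phi(\theta')$ one writes $\phi(\theta)=f_{\theta-\w}\circ\dots\circ f_{\theta-n\w}(\phi(\theta-n\w))$ (and likewise for $\theta'$) and peels off the factors one at a time, splitting each difference into a base-sensitivity part -- bounded by $\|\d_\theta f\|_\infty\,|\theta-\theta'|$ times an accumulated product of cocycle factors whose total sum is finite and, on a slightly smaller good set, uniformly bounded by the uniform convergence of $\tfrac1n S_n a$ -- and a fibre part governed by the contraction of $f^n$. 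The obstruction is that $\phi$ is discontinuous, so $\phi(\theta-n\w)$ and $\phi(\theta'-n\w)$ need not be close even though the base points remain at distance $|\theta-\theta'|$; one must identify the arc (the one not meeting the repeller $\phi_\tau^-$) on which these two values lie, choose the number $n$ of pull-back steps adapted to $|\theta-\theta'|$, and combine the uniform contraction with a distortion bound for the merely $\mc C^1$ fibre maps -- extracted from the explicit form of the maps in $\mc U$ -- to conclude that the $f^n$-image of that arc has length $\lesssim|\theta-\theta'|$. Together the two parts yield the Lipschitz estimate on the good set, hence -- by the reduction above -- the one-rectifiability of $\mu_{\phi_\tau^+}$; running the whole scheme for $f_\tau^{-1}$ gives that of $\mu_{\phi_\tau^-}$.
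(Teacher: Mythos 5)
Your reduction step is sound and in fact parallels the paper: there, Theorem~\ref{thm: main introduction} follows from Proposition~\ref{prop: tobias etds} (which places $f_\tau$ in $\mc V_\w$ for $\tau\in\Lambda$) together with Theorem~\ref{prop: dimensions_subgraphs}, whose proof is exactly a countable decomposition of $\phi^+$ into bi-Lipschitz graphs over sets $\Omega_j\setminus\mc M$ of measure tending to $1$, plus the observation that $\mu_{\phi^+}\ll\mc H^1|_{\Phi^+}$. The genuine gap is that you never establish the Lipschitz estimate itself (the analogue of Proposition~\ref{prop: decomposition in lipschitz graphs}), and the route you sketch for it would fail at two points. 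First, the claim that the formal-derivative series $g$ satisfies $\|g\|_\infty<\infty$ because $e^{S_ja}$ decays exponentially a.e.\ is a non sequitur and is in fact false here: the hyperbolicity is non-uniform, and since $f$ is minimal the closure of $\Phi^+$ over any full-measure set is all of $\T^2$, so $\phi^+$ cannot be uniformly Lipschitz on a full-measure set. Consistently, the paper only obtains constants $L_j\to\infty$ on the sets $\Omega_j$, whose complements shrink but never vanish.

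Second, and more fundamentally, pointwise (or Egorov-uniform) control of the Birkhoff sums $S_na$ along the graph does not control $d\bigl(f^n_{\theta-n\w}(\phi^+(\theta-n\w)),f^n_{\theta-n\w}(\phi^+(\theta'-n\w))\bigr)$: contraction of the derivative cocycle \emph{at} the graph point says nothing about the image of the arc joining the two pulled-back points, because the fibre maps are merely $\mc C^1$ and no distortion bound exists in this regularity (nor is there an ``explicit form'' to extract one from -- $\mc U$ is only a $\mc C^1$-open set of families). The paper's substitute for distortion is the interval structure $C,E$ together with \eqref{axiom: 2} and \eqref{eq: lipschitz x in C}: at good times both graph points lie in $C$, the connecting segment stays in $C$ and is contracted by $\alpha^{-1}$ as long as the base avoids $\I_0$, and the accumulated expansion at bad times is beaten because the number $\wp^n$ of \emph{simultaneous} visits of both backward orbits to $C$ (away from $\I_0$) is bounded below at \emph{all} intermediate times via the multiscale combinatorics -- the critical sets $\I_n$, conditions $(\mc F)_n$, $(\mc E)_n$, the sets $\Omega_j$, Corollary~\ref{cor: times in C}, Proposition~\ref{prop: pk < ik} and Proposition~\ref{prop: b} -- which makes the series in \eqref{eq: induction step n} converge and yields $L_j$. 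Nothing in your Lusin/Egorov setup, nor in the ``arc not meeting the repeller'' heuristic, supplies this pairwise, all-intermediate-times frequency control, so the crux of the theorem remains unproved in your proposal.
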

Observe that we hence obtain the afore-mentioned dimension gap as an immediate corollary:
While the rectifiability implies that the pointwise dimension of $\mu_{\phi_\tau^\pm}$ (for $\tau\in \Lambda$) equals $1$ almost surely
(see Corollary~\ref{cor: dimensions rectifiable measure}),
the box dimension of $\phi_\tau^\pm$ (and hence of the support of $\mu_{\phi_\tau^\pm}$) equals $2$--the dimension of the phase space $\T^2$.
The latter is a result of the stability of the box dimension
under taking closures and the denseness of $\phi^\pm_\tau$ in $\T^2$ which follows immediately from the minimality of $f_\tau$.

We want to remark that under the additional assumption of $f$ being $\mc C^2$, this dimension gap already follows from \cite[Corollary~I]{LedrappierYoung1985} where an upper bound
for the pointwise dimension is proven to be given by the Lyapunov dimension which is $1$ in the present case.
Similar arguments, based on the findings in \cite{Ledrappier1981}, had been applied in \cite{KimKimHuntOtt2003} to obtain the information dimension of
robust strange non-chaotic attractors.

However, the main point of the present work is to show the high degree of regularity of the measures 
$\mu_{\phi_\tau^\pm}$ mentioned above.
To this end, we have to decompose the graphs $\phi^+$ and $\phi^-$ (almost everywhere) in countably many
Lipschitz continuous graphs (cf. Proposition~\ref{prop: decomposition in lipschitz graphs}).
On a combinatorial level, the strategy we pursue has been applied successfully to ergodic measures supported on SNA/SNR-pairs
that occur in so-called \emph{saddle-node bifurcations} of \emph{qpf monotone interval maps} \cite{fuhrmanngrogerjager14}.
These are skew-products similar to \eqref{eq: defn skew product introduction}
but defined on $\T^1\times[0,1]$ and such that the maps $f_\theta(\cdot)$ are monotonously increasing (for each fixed $\theta\in\T^1$).
We will thus be able to recycle the rather technical combinatorial findings of \cite{fuhrmanngrogerjager14}.

On a geometric level, however, both situations are completely different: in \cite{fuhrmanngrogerjager14},
the monotonicity allowed for a point-wise approximation of the SNA (and SNR, respectively) by $\mc C^1$-curves.
The convergence of the $\mc C^1$-curves even turned out to be \emph{uniform} on sets of measure arbitrarily close to $1$ which hence yielded the desired
decomposition.
In the present situation, such an approximation seems out of reach.
As a result, we have to implement a local approach.
For a sketch of this local strategy, see Section~\ref{sec: rectifiability}. 
The details are given in the last section.

The fact that we observe rectifiability even beyond the possibility of obtaining the invariant graphs as limits of $\mc C^1$-curves
makes the authors believe that this property is verified by a larger class of 
invariant ergodic measures
with full support, zero entropy, and only one non-vanishing Lyapunov exponent.

Let us conclude this paragraph with some explicit examples of skew-product families our results apply to.
We want to remark, that these examples are discussed in further detail in \cite{Jager,JagerETDS}.
For $x\in \T^1$, let $\hat x\in(-1/2,1/2]$ be a lift of $x$, that is,
$\pi(\hat x)=x$, where $\pi\: \R\to \T^1$ denotes the canonical projection.
For $q\geq 2$ and $\alpha>0$, set $h_q\=\pi(a_q(\alpha \hat x)/2a_q(\alpha/2))$ with
\[
 a_q(x)\=\int_{0}^x\! 1/(1+|\zeta|^q)\,d\zeta.
\]
It is straightforward to see that
\begin{align}\tag{$\ast\ast$}\label{example 1}
 g_{q,\tau}\:\T^2\ni (\theta,x)\mapsto (\theta+\w, h_q(x)+V(\theta)+\tau)
\end{align}
is of the form \eqref{eq: defn skew product introduction} for each $\tau\in[0,1]$ and $V\:\T^1\to\T^1$.
In fact, for each $q$ there are appropriate $V$ such that \eqref{example 1} lies in the set $\mc U$ of Theorem~\ref{t.modelocking} if $\w$ is Diophantine and $\alpha$ is large
enough--depending on $\delta,\ \w,\ q$ and $V$--and we thus obtain the $1$-rectifiability
of the ergodic measures for all $\tau$ in a set of Lebesgue measure at least $1-\delta$.
Notice that $a_2(x)=\arctan(x)$ so that \eqref{example 1} in particular contains the projective action of the
$\textrm{SL}(2,\R)$-cocycle over the irrational rotation by $\w$ associated to
\[
 A(\theta)=
 \textrm{R}_{V(\theta)+\tau}\cdot
 \begin{pmatrix}
  \alpha & 0\\
  0 & 1/\alpha
 \end{pmatrix},
\]
where $\textrm{R}_\varphi$ denotes the rotation matrix by angle $\varphi$.
Here, a possible choice is $V(\theta)=\cos(2\pi\theta)$, for example.

We would further like to mention that, in principal, our
arguments also show the $1$-rectifiability of the invariant measures of the driven Arnold circle map
\[
  f_{\alpha,\beta,\tau} : \T^2\to\T^2,
  \quad (\theta,x)\mapsto \left(\theta+\w, x+\tau+\frac{\alpha}{2\pi}\sin(2\pi x) + V_\beta(\theta)\ \bmod 1\right)
\]
  for appropriate $V_\beta$ and $|\alpha|\leq 1$.
  Strictly speaking, some modifications are needed to
  include this case: the derivative of the fibre map $f_{\alpha,\beta,\tau,\theta}$ with respect to $x$
  remains bounded by 2 for any fixed $\theta\in\T^1$ in the invertible regime $|\alpha|\leq 1$.
  However, our proofs hinge on high expansion rates in the $x$-direction.
  To bypass this problem, we would have to require a
  special shape of the forcing function [a suitable choice is $V_\beta(\theta)=\arctan(\beta\sin(2\pi\theta))/\pi$] and a largeness
  assumption on the additional parameter $\beta$.
  We omit the technicalities of the discussion of this special case and refer the interested readers to \cite{Jager,JagerETDS} for the details.

  The required prerequisites of our investigation are presented in the next section.
  Section 3 yields the proof of our main result under the assumption of a technical proposition whose
  proof is postponed to the last section.

\section{Preliminaries}
In the first subsection, we shortly collect basic facts from geometric measure theory.
In the second subsection, we provide a precise description of
those systems we consider throughout this work, and formulate our main result.

\subsection{Rectifiable measures}\label{sec: rectifiable measures}
We provide the definition and a few properties of rectifiable measures where we mainly follow
\cite{AmbrosioKirchheim2000}.

Let $Y$ be a metric space.
We denote the diameter of a subset $A\subseteq Y$ by $|A|$.
For $\varepsilon>0$,
we call a finite or countable collection $\{A_i\}$ of subsets of $Y$
an {\em $\varepsilon$-cover} of $A$ if $|A_i|\leq\varepsilon$ for each
$i$ and $A\subseteq\bigcup_i A_i$.
\begin{defn}
For $A\subseteq Y$, $s\geq 0$ and $\varepsilon>0$, we define
\[
\mathcal H_\varepsilon^s(A)\=\inf\left\{\left.\sum\limits_i
    \left|A_i\right|^s \ \right|\ \{A_i\}\text{ is an
    $\varepsilon$-cover of $A$}\right\}
\]
and call
\[
	\mathcal H^s(A)\=\lim\limits_{\varepsilon\to 0} \mathcal H_\varepsilon^s(A)
\]
the \emph{$s$-dimensional Hausdorff measure} of $A$.
\end{defn}
\begin{defn}
  For $d\in\N$, we call a Borel set $A\subseteq Y$ \emph{countably
    $d$-rectifiable} if there exists a sequence of Lipschitz
  continuous functions $(g_i)_{i\in\N}$ with $g_i:A_i\subseteq\R^d\to
  Y$ such that $\mathcal H^d(A\backslash\bigcup_i g_i(A_i))=0$. A
  finite Borel measure $\mu$ is called \emph{$d$-rectifiable} if
  $\mu=\Theta\left.\mathcal H^d\right|_A$ for some countably
  $d$-rectifiable set $A$ and some Borel measurable density
  $\Theta:A\to[0,\infty)$.
\end{defn}
Observe that, by the Radon-Nikodym theorem, $\mu$ is $d$-rectifiable
if and only if $\mu$ is absolutely continuous with respect to
$\left.\mathcal H^d\right|_A$ where $A$ is a countably $d$-rectifiable
set.
\begin{thm}[{\cite[Theorem 5.4]{AmbrosioKirchheim2000}}]
  For a $d$-rectifiable measure $\mu=\Theta\left.\mathcal
    H^d\right|_A$, we have
\[
\Theta(x)=\lim\limits_{\varepsilon\to
  0}\frac{\mu(B_\varepsilon(x))}{V_d\varepsilon^d},
\]
for $\mathcal H^d$-a.e.\ $x\in A$, where $V_d$ is the volume of the
$d$-dimensional unit ball. The right-hand side of this equation is
called the $d$-density of $\mu$.
\end{thm}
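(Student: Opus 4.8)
The plan is to reduce the statement to the Lebesgue differentiation theorem on $\Rd$ via the structure theory of rectifiable sets; throughout I use the normalisation of $\mathcal H^d$ for which $\mathcal H^d=\mathcal L^d$ on $\Rd$, where $\mathcal L^d$ denotes Lebesgue measure. Since $\mu$ is finite, $\mathcal H^d(\{\Theta=\infty\})=0$; discarding this set, treating $\{\Theta=0\}$ separately, and splitting $A$ along the dyadic values of $\Theta$, one reduces to the case in which $\Theta$ is bounded, and a further refinement of this countable decomposition makes $\Theta$ constant up to an arbitrarily small relative error on each resulting piece. By the definition of countable $d$-rectifiability, $A$ agrees up to an $\mathcal H^d$-null set with a countable union of Lipschitz images $g_h(A_h)$, $A_h\subseteq\Rd$; invoking Kirchheim's metric-differentiability theorem and the attendant area formula, one cuts each $A_h$ into countably many Borel pieces on which $g_h$ is bi-Lipschitz onto its image, throwing away---as $\mathcal H^d$-null, by the area formula---the part where the metric derivative degenerates. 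After disjointification this gives $A=N\cup\bigsqcup_i S_i$ with $\mathcal H^d(N)=0$, the $S_i$ pairwise disjoint Borel sets, $S_i=g_i(E_i)$ for an injective $g_i$ that is bi-Lipschitz on a Borel set $E_i\subseteq\Rd$, and $\Theta\equiv\theta_i$ on $S_i$. The area formula then reads $\mathcal H^d|_{S_i}=(g_i)_*(\mathbf J g_i\cdot\mathcal L^d|_{E_i})$, hence $\mu|_{S_i}=(g_i)_*(\theta_i\,\mathbf J g_i\cdot\mathcal L^d|_{E_i})$, where $\mathbf J g_i$ denotes the metric Jacobian.

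Next I would perform a blow-up on a single piece. Fix $i$ and a point $y\in E_i$ that is simultaneously a Lebesgue density point of $E_i$, a point of metric differentiability of $g_i$ with non-degenerate seminorm $s\=\operatorname{md}(g_i)_y$, and a Lebesgue point of $\mathbf J g_i$; $\mathcal L^d$-a.e.\ $y\in E_i$, and therefore $\mathcal H^d$-a.e.\ $x\=g_i(y)\in S_i$, has these properties. Metric differentiability gives $d_Y(g_i(v),x)=s(v-y)+o(|v-y|)$, so for every $\eta>0$ and all sufficiently small $\varepsilon$ the set $g_i^{-1}(B_\varepsilon(x))\cap E_i$ is sandwiched between $\{v:s(v-y)<(1-\eta)\varepsilon\}\cap E_i$ and $\{v:s(v-y)<(1+\eta)\varepsilon\}\cap E_i$. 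Since the sublevel sets of the fixed non-degenerate norm $s$ form a regular family shrinking to $y$, letting $\varepsilon\to 0$ and then $\eta\to 0$ yields
\[
\frac{\mathcal H^d(B_\varepsilon(x)\cap S_i)}{V_d\,\varepsilon^d}=\frac{1}{V_d\,\varepsilon^d}\int_{g_i^{-1}(B_\varepsilon(x))\cap E_i}\!\!\mathbf J g_i\;d\mathcal L^d\;\longrightarrow\;\frac{\mathbf J g_i(y)\cdot\mathcal L^d(\{s\le 1\})}{V_d}=1,
\]
the last equality being exactly the normalisation built into $\mathbf J$; the same computation with the extra constant factor $\theta_i$ gives $\mu(B_\varepsilon(x)\cap S_i)/(V_d\varepsilon^d)\to\theta_i=\Theta(x)$.

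Finally, one must show that the remaining pieces are negligible at a typical point of $S_i$, i.e.\ $\mu(B_\varepsilon(x)\setminus S_i)=o(\varepsilon^d)$ for $\mathcal H^d$-a.e.\ $x\in S_i$; this I expect to be the main obstacle, since the $S_j$ with $j\neq i$ may be densely interwoven with $S_i$ and, $Y$ being a general metric space, the Besicovitch covering theorem is not at hand. The input is the standard density estimate---proved by a covering argument and, importantly, not a consequence of the present theorem---that for any set of finite $\mathcal H^d$-measure the upper $d$-density of $\mathcal H^d$ restricted to it is at most $1$ almost everywhere. Applying this to each union $S_i\cup S_j$ and subtracting the exact limit $1$ obtained above for the $S_i$-part, one gets $\limsup_{\varepsilon\to 0}\mathcal H^d(B_\varepsilon(x)\cap S_j)/\varepsilon^d=0$ for every fixed $j\neq i$ and $\mathcal H^d$-a.e.\ $x\in S_i$. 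To pass from a single $j$ to the whole tail $\bigsqcup_{j\neq i}S_j$, and from $\mathcal H^d$ to $\mu$, one combines (i) finiteness of $\mu$, so that $\mu(\bigsqcup_{j>J}S_j)$ is small and, by a Vitali-type argument carried out in the Euclidean charts $E_j$, has vanishing $\mu$-density off an exceptional set of small measure, with (ii) for the finitely many $j\le J$, a truncation of $\Theta|_{S_j}$ together with the $\mathcal H^d$-negligibility just established; the set $\{\Theta=0\}$ is handled analogously, with $\mathcal H^d|_{\{\Theta>0\}}$ (which is $\sigma$-finite and rectifiable) playing the role of the interfering pieces. Putting the three steps together gives $\mu(B_\varepsilon(x))/(V_d\varepsilon^d)\to\Theta(x)$ for $\mathcal H^d$-a.e.\ $x\in A$. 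Conceptually, all of this is the assertion that the rescalings of $\mu$ about $x$ at scale $\varepsilon$ converge weakly-$*$ to $\Theta(x)$ times a Haar measure on the approximate tangent space of $A$ at $x$, and the real work lies in establishing this convergence in the absence of a Euclidean ambient space.
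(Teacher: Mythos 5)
The paper does not actually prove this statement: it is quoted verbatim from Ambrosio--Kirchheim \cite{AmbrosioKirchheim2000} (their Theorem~5.4), so there is no internal proof to compare against. Your sketch essentially reconstructs the argument of that reference -- decomposition of the rectifiable carrier into countably many bi-Lipschitz pieces via Kirchheim's metric differentiability theorem and the metric area formula, a blow-up at simultaneous density/Lebesgue points using the normalisation built into the metric Jacobian, and covering-based density comparisons in place of Besicovitch differentiation -- and it is correct in outline. Two points to tighten. First, the elimination of the interfering mass (the fixed pieces $S_j$, $j\neq i$, the tail $\bigcup_{j>J}S_j$, and the set $\{\Theta=0\}$) is most cleanly done with the standard $5r$-covering comparison lemma applied in the metric space $Y$ itself: for a finite Borel measure $\nu$ one has $\mathcal H^d\bigl(\{x\in E:\limsup_{\varepsilon\to0}\nu(B_\varepsilon(x))/\varepsilon^d>t\}\bigr)\leq C\,\nu(U)/t$ for open $U\supseteq E$, which together with outer regularity kills the density of $\nu$ at $\mathcal H^d$-a.e.\ point of a $\nu$-null set $E$; a Vitali argument ``in the Euclidean charts $E_j$'' is not available for this step, since the centres $x\in S_i$ have no preimage under $g_j$. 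Second, your dyadic reduction makes $\Theta$ only approximately constant on each piece, whereas later you use $\Theta\equiv\theta_i$ exactly; this is harmless, but the conclusion then comes with an $\eta$-error that must be removed by a diagonal argument (equivalently, by the usual $t\leq\Theta<s$ sandwich). Finally, note that your normalisation $\mathcal H^d=\mathcal L^d$ on $\Rd$, which the statement with denominator $V_d\varepsilon^d$ presupposes, differs from the paper's unnormalised definition of $\mathcal H^s$ by the constant $2^d/V_d$.
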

From the last theorem, we can deduce that the $d$-density exists and
is positive $\mu$-almost everywhere for a $d$-rectifiable measure
$\mu$. This directly implies the next corollary.

For $x\in Y$ and $\varepsilon>0$, let
$B_\varepsilon(x)$ be the open ball around $x$ with radius
$\varepsilon>0$.
\begin{cor}\label{cor: dimensions rectifiable measure}
  A $d$-rectifiable measure $\mu$ is \emph{exact dimensional} with $d_\mu=d$, that is,
  the \emph{pointwise dimension}
\begin{align*}
  d_\mu(x)\=\lim\limits_{\varepsilon\to 0}\frac{\log\mu(B_\varepsilon(x))}{\log\varepsilon}\\
\end{align*}  
  exists and equals $d$ $\mu$-almost surely.
\end{cor}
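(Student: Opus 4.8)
The plan is to read everything off from the preceding theorem of Ambrosio--Kirchheim together with the remark that the $d$-density of a $d$-rectifiable measure exists and is positive $\mu$-almost everywhere. Write $\mu=\Theta\,\mathcal H^d|_A$ with $A$ countably $d$-rectifiable and $\Theta\colon A\to[0,\infty)$ Borel measurable.

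First I would fix a full-measure set on which the argument runs. By the theorem, the limit $\Theta(x)=\lim_{\varepsilon\to 0}\mu(B_\varepsilon(x))/(V_d\varepsilon^d)$ exists for $\mathcal H^d$-almost every $x\in A$; since $\mu$ is absolutely continuous with respect to $\mathcal H^d|_A$, the same statement holds for $\mu$-almost every $x$. Moreover $\Theta(x)<\infty$ for every $x\in A$ by the very definition of $\Theta$, and $\Theta(x)>0$ for $\mu$-almost every $x$ (the set $\{\Theta=0\}\cap A$ is $\mu$-null, being where the density of $\mu$ vanishes). Hence there is a Borel set $A'\subseteq A$ with $\mu(A\setminus A')=0$ such that for every $x\in A'$ one has $\Theta(x)\in(0,\infty)$ and $\mu(B_\varepsilon(x))=V_d\varepsilon^d\,\Theta(x)\,(1+o(1))$ as $\varepsilon\to 0$; in particular $\mu(B_\varepsilon(x))>0$ for all sufficiently small $\varepsilon$.

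Then I would simply take logarithms. For $x\in A'$ and $\varepsilon$ small enough,
\[
 \log\mu(B_\varepsilon(x)) = d\log\varepsilon + \log\bigl(V_d\,\Theta(x)\bigr) + \log\bigl(1+o(1)\bigr).
\]
Dividing by $\log\varepsilon$, which tends to $-\infty$ as $\varepsilon\to 0$, and using that $\log(V_d\,\Theta(x))$ is a finite constant while $\log(1+o(1))\to 0$, we obtain
\[
 d_\mu(x)=\lim_{\varepsilon\to 0}\frac{\log\mu(B_\varepsilon(x))}{\log\varepsilon}=d
\]
for every $x\in A'$, that is, for $\mu$-almost every $x$. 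This shows that $\mu$ is exact dimensional with $d_\mu=d$.

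There is no genuine obstacle here; the only point that needs a moment's care is that one must invoke both the finiteness and the strict positivity of the density $\Theta(x)$ at $\mu$-almost every $x$, so that $\log(V_d\,\Theta(x))$ is a bona fide finite constant that is washed out after division by $\log\varepsilon$ — and both facts are precisely what the theorem and the subsequent remark provide.
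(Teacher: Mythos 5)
Your proposal is correct and follows exactly the route the paper intends: the paper deduces from the Ambrosio--Kirchheim theorem that the $d$-density exists, is finite and is positive $\mu$-almost everywhere, and then regards the corollary as an immediate consequence, which is precisely the logarithm computation $\log\mu(B_\varepsilon(x))=d\log\varepsilon+\log\bigl(V_d\Theta(x)\bigr)+o(1)$ that you spell out. Your added care about positivity (so that $\mu(B_\varepsilon(x))>0$ for small $\varepsilon$) and finiteness of $\Theta(x)$ is exactly the point the paper's remark is meant to supply, so there is nothing to correct.
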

\begin{rem}
Note that if $\mu$ is exact dimensional, then in the setting of
separable metric spaces several other dimensions of $\mu$ coincide
with the pointwise dimension \cite{Zindulka2002,Young1982Dimension,Pesin1993}.
\end{rem}

\subsection{Statement of the main result}\label{subsec: basic setting and notation}
The aim of this section is to formulate a number of assumptions that define a set $\mathcal{V}_\w$ of skew-products
which guarantee the existence of SNA/SNR-pairs whose associated invariant measures are $1$-rectifiable.
In particular, the set $\mc V_\w$ will comprise those members of the families considered in Theorem~\ref{t.modelocking} for which Theorem~\ref{t.modelocking}
ensures the existence of an SNA.

Principally speaking, it would be possible to define ${\mc V}_\w$
by means of explicit $\mathcal{C}^1$-estimates only (cf. Proposition~\ref{prop: tobias cmp} and Proposition~\ref{prop: tobias etds} below and
the corresponding references).
However,
besides some of these estimates,
our investigation builds on particular dynamical properties--foremost some slow
recurrence conditions for certain critical sets defined in the
multiscale analysis carried out in \cite{Jager,jager2013}--which are already a result of the collection of these explicit estimates.
In order to avoid the redundance of proving these properties once more and for the reader's convenience, we
will define ${\mc V}_\w$ in a partially intrinsic and
somewhat abstract way by means of those $\mathcal{C}^1$-estimates that are needed for our purposes and
by means of the required dynamical behaviour.
However, the important fact is that for $\w$ being Diophantine, the set ${\mc V}_\w$ is rich (cf. Proposition~\ref{prop: tobias cmp} and Proposition~\ref{prop: tobias etds})
and contains the examples of the form \eqref{example 1} discussed in the introduction.

Let $\mathcal{F}:=\{f\in \textrm{Diff}^1(\T^2)\ |\ \pi_1\circ f=\pi_1\}$, where $\textrm{Diff}^1(\T^2)$
denotes the group of diffeomorphisms of the two-torus $\T^2$ which are homotopic to the identity, and $\pi_i$ is the projection to the respective
coordinate. Note that for $F\in\mathcal{F}$ we have $F(\theta,x)=(\theta,f_\theta(x))$ where $f_\theta(\cdot)=\pi_2\circ F(\theta,\cdot)$,
such that we can view F as a collection of \emph{fibre maps} $(f_\theta)_{\theta\in\T^1}$.
Further, for any $\omega\in\T^1$ we set $R_\omega(\theta,x)\=(\theta+\omega, x)$ and
\[\mathcal{F}_\omega:=\{f=R_\omega\circ F\ |\ F\in\mathcal F\}.\]

In the following, let $ f=R_\omega\circ F\in\mathcal F_\omega$ be given, where $\omega\in\T^1$ is irrational and $F\in\mathcal{F}$.
It is customary to use the notation
\[f_\theta^k(x):=\pi_2\circ f^k(\theta,x) \quad (\theta,x\in\T^1,\ k\in\Z).\]
In particular, this means $f_\theta^{-1}=(f_{\theta-\omega})^{-1}$.
We assume the existence of both an \emph{interval of contraction}
$C=[c^-, c^+]\ssq \T^1$ and \emph{expansion} $E=[e^-, e^+]\ssq \T^1$
where $C$ and $E$ are disjoint (the naming becomes clear below) and a finite union  $\mc I_{0}
\ssq \T^1$ of $\mc N$ disjoint open intervals $\mc I_0^1,\ldots, \mc I_0^{\mc N}$, called the \emph{(first) critical region}, such that
\begin{align}
  f_{\theta}\left(x\right)\in \textrm{int}(C) \text{ for all } x\notin (e^-, e^+) \text{
    and }\theta \notin \I_{0}.
\label{axiom: 2}
\end{align}
Further, we suppose there are $\alpha>4$ and $S>0$ such that
for arbitrary $\theta,\theta' \in \T^1$ we have
\begin{align}
  \alpha^{-2}d(x,x')\leq d(f_{\theta}(x),f_{\theta}(x'))&\leq \alpha^{2}
  d(x,x')\ \text{ for all } x,x'\in \T^1, \label{eq: lipschitz x}\\
  d(f_{\theta}(x),f_{\theta'}(x))&\leq S d(\theta,\theta')
 \text{ for all } x\in \T^1, \label{eq: lipschitz theta}\\
  |\partial_xf_{\theta}(x)|&\leq \alpha^{-1}\
  \label{eq: lipschitz x in C} \text{ for all } x\in C, \\
  |\partial_xf_{\theta}(x)|&\geq \alpha\
  \label{eq: lipschitz x in E} \text{ for all } x\in E.
\end{align}
These are the explicit estimates needed to define
$\mathcal{V}_\omega$. In order to state the required
dynamical properties, let $K_n=K_0 \kappa^n$ for some integers $\kappa\geq2, \ K_0\in \N$.
Set
\begin{align*}
 b_0\=1, \qquad b_n\=(1-1/K_{n-1})b_{n-1} \qquad (n\in \N)
\end{align*}
and assume $K_0$ and $\kappa$ are big enough such that
$b\=\lim_{n\to \infty} b_n>\sqrt{5/6}$.
\begin{defn}\label{defn: critical sets}
  Let $(M_n)_{n\in\N_0}$ be a super-exponentially increasing sequence of integers with $M_0\geq 2$.
  For $n\in \N_0$, we recursively define the $n+1$-th \emph{critical
    region} $\mc I_{n+1}$ in the following way:
\begin{itemize}
 \item $\mc A_{n} \=\left (\mc I_{n} - (M_n-1)\w\right)\times C$,
 \item $\mc B_{n} \=\left(\mc I_{n} + (M_n+1)\w\right)\times E$,
 \item $\mc I_{n+1}\=\textrm{int}\left( \pi_{1} \left(f^{M_n-1} (\A_{n})\cap f^{-(M_n+1)}(\B_{n})\right)\right)$.
\end{itemize}

Note that we trivially have $\I_{n+1}\ssq\I_n$.
For $n\in \N_0$, set  $\mc{W}_n^+ \= \bigcup_{j=0}^{n}
\bigcup_{l=1}^{M_j+1}\mc I_j+l\w$; $\mc{W}_n^- \= \bigcup_{j=0}^{n}
\bigcup_{l=-(M_j-1)}^{0}\mc I_j+l\w$ and set $\mc{W}_{-1}^\pm=\emptyset$.
\end{defn}

\begin{defn}\label{def: recurrence properties of critical intervals}
 Suppose $(M_n)_{n\in\N_0}$ and $(\mc I_n)_{n\in\N_0}$ are chosen as above with $M_{n+1}\leq 2 \alpha^{M_n/16}$ ($n\in\N_0$). Let $(\varepsilon_n)_{n\in\N_0}$ be
  a non-increasing sequence of positive real numbers satisfying $\varepsilon_0\leq 1$ and $\varepsilon_{n+1}
  \leq 2\alpha^{-M_n/4}/s$ for some fixed $s>0$ and all $n\in\N_0$.
  We say $f$
  verifies $(\mc F1)_n$ and $(\mc F2)_n$, respectively if
\begin{enumerate}[$(\mc F1)_n$]
\item $\mc I_{j}\cap \bigcup_{k=1}^{2K_jM_j} \mc I_{j}
    +k\w=\emptyset$, for all $j=0,\ldots, n$ \label{axiom: diophantine 1}
 \item $\left(\mc I_{j}- (M_j-1)\w \cup \mc I_{j}+ (M_j+1)\w\right) \bigcap
\left ( \mc{W}_{j-1}^+ \cup \mc{W}_{j-1}^-\right)
=\emptyset$, for all $j=1,\ldots, n$. \label{axiom: diophantine 2}
\end{enumerate}
If $f$ satisfies both
$(\mc F1)_n$, $(\mc F2)_n$, and $\I_j\neq \emptyset$ for $j=0,\ldots,n$, we say $f$ satisfies
$(\mc F)_n$. 
Further, for $(\varepsilon_n)_{n\in\N_0}$ as above,
 we say $f$ satisfies $(\mathcal{E})_n$ if $\mc I_n$ also consists of exactly $\mc N$ connected components $\mc I_n^1, \ldots, \mc I_n^{\mc N}$ with 
\begin{itemize}
\item\quad  $|\I_n^\iota|\ < \eps_n$ \label{axiom: I_n}  for all $\iota\in\{1,\ldots, \mc N\}$.
\end{itemize}
\end{defn}
\begin{rem}
If ${(M_n)}_{n\in\N}$, ${(\I_n)}_{n\in\N}$ and ${(\varepsilon_n)}_{n\in\N}$ full fill the assumptions of the above definition, then there exist $\alpha_*>1$ and $\eps_*>0$ such that
for any $\alpha\geq\alpha_*$ and $0<\eps_0\leq \eps_*$ we have
\begin{equation}\label{estimate-eps-m}
\textrm{Leb}_{\T^1}\left(\bigcup_{n\in\N}\bigcup_{\ell=0}^{M_n}\I_n-\ell\w\right)\leq \sum_{n=0}^\infty(M_n+1)\mc N\eps_n\leq \sum_{n=0}^\infty \eps_n^{1/2}<1/16.
\end{equation}
\end{rem}

In the following, we say $f$ \emph{satisfies
(\ref{axiom: 2})--(\ref{eq: lipschitz x in E}), $(\mathcal{F})_n$ and $(\mathcal{E})_n$} if it verifies the respective assumptions for some choice of the above constants and sequences
$(M_n)_{n\in\N_0}, (\varepsilon_n)_{n\in\N_0}$ with $\alpha\geq \alpha_*, 0<\eps_0\leq \eps_*$. 
With these notions, we are now in the position to define the set
$\mathcal{V}_\omega$ for any $\omega\in\T^1\setminus\Q$ and formulate our main result
Theorem~\ref{prop: dimensions_subgraphs}.
\begin{defn}
For any $\omega\in\T^1\setminus\Q$, we say $f\in\mathcal{F}_\omega$ is an element of $\mathcal{V}_\omega$ if
\begin{itemize}
\item $f$ satisfies (\ref{axiom: 2})--(\ref{eq: lipschitz x in E}) with $\alpha\geq \alpha_*$ and $|\I_0^\iota|<\eps_0\leq \eps_* (\iota=1,\ldots,\mc N)$;
\item $f$ satisfies $(\mathcal{F})_n$ and $(\mathcal{E})_n$ for all $n\in\N$;
\item $f$ has an SNA $\phi^+$ and SNR $\phi^-$, and $\mu_{\phi^\pm}$ are the only $f$-invariant ergodic measures;
\item $f$ is minimal.
\end{itemize}
\begin{thm}\label{prop: dimensions_subgraphs}
Suppose $f\in\mathcal{V}_\omega$. Then $\mu_{\phi^+}$ and $\mu_{\phi^-}$ are $1$-rectifiable.
\end{thm}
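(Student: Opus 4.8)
The plan is to establish $1$-rectifiability of $\mu_{\phi^{\pm}}$ by decomposing each invariant graph $\phi^{\pm}$, up to a $\mathrm{Leb}_{\T^1}$-null set of $\theta$'s, into a countable union of Lipschitz graphs over Borel subsets of $\T^1$; this is exactly the content that Proposition~\ref{prop: decomposition in lipschitz graphs} is announced to deliver, so the bulk of the work consists of (i)~showing that such a decomposition yields $1$-rectifiability of the associated measure, and (ii)~reducing the construction of the decomposition to a local statement. For step~(i), suppose $\phi^{+}$ agrees $\mathrm{Leb}_{\T^1}$-a.e.\ with $\bigcup_{i} \psi_i$, where each $\psi_i \colon D_i \subseteq \T^1 \to \T^1$ is $L_i$-Lipschitz. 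The graph map $\Gamma_i \colon \theta \mapsto (\theta, \psi_i(\theta))$ is then bi-Lipschitz from $D_i$ onto its image $\Psi_i \subseteq \T^2$, so $\mathcal H^{1}|_{\Psi_i}$ is comparable to the push-forward of $\mathrm{Leb}_{\T^1}|_{D_i}$ under $\Gamma_i$; since $\mu_{\phi^{+}}$ is by its very definition $\pi_1$-distributed as $\mathrm{Leb}_{\T^1}$ along $\Phi^{+}$, it is absolutely continuous with respect to $\mathcal H^{1}|_{\Phi^{+}}$, and $\Phi^{+}$ (up to an $\mathcal H^{1}$-null set) is the countably $1$-rectifiable set $\bigcup_i \Psi_i$. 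By the Radon--Nikodym remark following the definition of rectifiable measures, this gives $1$-rectifiability; the same argument applies verbatim to $\phi^{-}$.

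For step~(ii), the reduction to a local statement, I would fix a countable family of ``good'' scales and positions (governed by the multiscale parameters $M_n$, $K_n$, $\varepsilon_n$ and the critical regions $\mc I_n$) and, for each $\theta$ outside the exceptional null set, identify a scale $n(\theta)$ at which the piece of $\phi^{+}$ sitting over a small interval around $\theta$ can be straightened: the high expansion estimate \eqref{eq: lipschitz x in E} together with the contraction estimate \eqref{eq: lipschitz x in C} and the slow-recurrence conditions $(\mc F1)_n$, $(\mc F2)_n$ force nearby orbits on the graph to stay controlled for many iterates, so that the graph restricted to such an interval has uniformly bounded difference quotients. Countably many such local pieces, indexed by the (countably many) admissible scale/interval combinations, then cover $\Phi^{+}$ up to measure zero—here the estimate~\eqref{estimate-eps-m} on the total Lebesgue measure of the critical neighbourhoods guarantees that the ``bad'' set where no good scale exists is null. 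The combinatorial bookkeeping needed to show that every $\theta$ in a full-measure set does land in one of these pieces is precisely where the results imported from \cite{fuhrmanngrogerjager14} enter.

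The main obstacle will be the local Lipschitz estimate itself: unlike the monotone interval-map setting of \cite{fuhrmanngrogerjager14}, where the SNA is a pointwise limit of $\mc C^1$-curves and uniform convergence on large sets is available, here one has no global approximation, and one must extract Lipschitz control directly from the interplay between expansion along $E$, contraction along $C$, and the placement of the critical regions. Concretely, given two points $\theta, \theta'$ close together, one has to track the backward (for $\phi^{+}$) orbits of $(\theta,\phi^{+}(\theta))$ and $(\theta',\phi^{+}(\theta'))$ through the scales, use \eqref{eq: lipschitz theta} to bound the horizontal drift and \eqref{eq: lipschitz x in C}--\eqref{eq: lipschitz x in E} to bound the vertical distance, and arrange that the two orbits remain on the same side of every critical set encountered until the scales have ``caught up'' with the separation $d(\theta,\theta')$—at which point the accumulated distortion is bounded by a geometric series. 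Making this argument uniform over a positive-measure (indeed, measure-arbitrarily-close-to-one) set of $\theta$, and then exhausting $\T^1$ by countably many such sets, is the technical heart of the matter and is what Proposition~\ref{prop: decomposition in lipschitz graphs} and its proof in the final section are designed to accomplish; granting that proposition, Theorem~\ref{prop: dimensions_subgraphs} follows from step~(i) above.
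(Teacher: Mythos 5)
Your proposal follows essentially the same route as the paper: granting Proposition~\ref{prop: decomposition in lipschitz graphs}, you restrict $\phi^+$ to the countably many sets on which it is Lipschitz, note that the associated graph maps $\theta\mapsto(\theta,\phi^+(\theta))$ are bi-Lipschitz, and conclude that $\mu_{\phi^+}$ is absolutely continuous with respect to $\mathcal H^1$ restricted to a countably $1$-rectifiable set; this is exactly the paper's proof of Theorem~\ref{prop: dimensions_subgraphs}, with all the dynamical work deferred to the proof of the proposition, as you anticipate, and the case of $\phi^-$ handled by passing to $f^{-1}$.

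One step is misstated, though the repair is immediate from your own setup: you claim that $\Phi^+$ coincides with the union $\bigcup_i\Psi_i$ of the Lipschitz graph pieces up to an $\mathcal H^1$-null set. That is not justified and need not be true: the exceptional piece of the graph lies over a $\textrm{Leb}_{\T^1}$-null set of base points (the set $\Omega_\infty\cup\mc M$ in the paper's notation), but a graph over a Lebesgue-null set can still carry positive $\mathcal H^1$-measure, so $\mathcal H^1$-nullity of that piece does not follow. What you actually need, and what does follow at once from the definition $\mu_{\phi^+}(A)=\textrm{Leb}_{\T^1}(\pi_1(A\cap\Phi^+))$, is that the exceptional piece has $\mu_{\phi^+}$-measure zero, since its first-coordinate projection is Lebesgue-null. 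With that substitution, $\mu_{\phi^+}$ is absolutely continuous with respect to $\mathcal H^1$ restricted to the countably $1$-rectifiable set $\bigcup_i\Psi_i$ (absolute continuity with respect to $\mathcal H^1|_{\Phi^+}$ holding as you say because $\pi_1$ is $1$-Lipschitz), and the conclusion follows by the Radon--Nikodym remark, exactly as in the paper.
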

\end{defn}
We finish this section with two statements that highlight from different perspectives that--despite the technical character of the above assumptions--elements of $\mc V_\w$ occur naturally.
\begin{prop}[{cf. \cite[Theorem~2.1]{Jager}}]\label{prop: tobias cmp}
Given $\delta>0$, there exists a non-empty $\mathcal C^1$-open set $\mc U=\mathcal U(\delta)\subseteq \mathcal F$ with the following property.
For all $F\in\mathcal U$ there exists a set $\Delta_F\subseteq \T^1$ with
$\textrm{Leb}_{\T^1}(\Delta_F)\geq1-\delta$ and such that for any $\omega\in\Delta_F$ we have $R_\omega\circ F\in \mathcal{V}_\omega$.
\end{prop}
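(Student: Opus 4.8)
The plan is to notice that membership of $f=R_\omega\circ F$ in $\mathcal V_\omega$ decouples into a $\mathcal C^1$-open condition involving only the fibre maps -- hence only $F$ -- and a package of combinatorial and dynamical conclusions that is exactly what the multiscale analysis of \cite{Jager,jager2013} delivers. Accordingly, the proposition should be obtained by quoting \cite[Theorem~2.1]{Jager} and verifying that its output meets the four bullets defining $\mathcal V_\omega$; the real work is the translation between our axioms and what is (sometimes only implicitly) established there.

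First I would fix a reference map $F_0\in\mathcal F$ of the form underlying the examples \eqref{example 1} -- with $q$ and $V$ as in the introduction and $\alpha$ large -- for which \eqref{axiom: 2}--\eqref{eq: lipschitz x in E} hold with strict inequalities, with $\alpha\geq\alpha_*$, and with a first critical region $\mathcal I_0$ consisting of $\mathcal N$ components of length $<\varepsilon_0\leq\varepsilon_*$. Since \eqref{eq: lipschitz x}--\eqref{eq: lipschitz x in E} constrain only $f_\theta$, $\partial_x f_\theta$ and $\partial_\theta f_\theta$ uniformly in $\theta$, while \eqref{axiom: 2} asks that a compact set be mapped into an open one, all of these are $\mathcal C^1$-open conditions; hence there is a $\mathcal C^1$-neighbourhood $\mathcal U_1\ni F_0$ in $\mathcal F$ on which \eqref{axiom: 2}--\eqref{eq: lipschitz x in E} continue to hold with the same constants $\alpha, S$ and the same $\mathcal I_0$. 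As the fibre maps of $R_\omega\circ F$ do not depend on $\omega$, the first bullet of the definition of $\mathcal V_\omega$ then holds for every $F\in\mathcal U_1$ and every irrational $\omega$.

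Next I would feed these estimates into the parameter-exclusion scheme of \cite{Jager,jager2013}. For $F$ in a possibly smaller $\mathcal C^1$-open set $\mathcal U\subseteq\mathcal U_1$ and for every $\omega$ outside an exceptional set $N_F\subseteq\T^1$ of Lebesgue measure at most $\delta$ (which we may assume contains $\Q$), that construction produces admissible constants $\kappa\geq 2$, $K_0\in\N$ for which the resulting $b$ exceeds $\sqrt{5/6}$, a super-exponentially increasing sequence $(M_n)$ with $M_0\geq 2$ and $M_{n+1}\leq 2\alpha^{M_n/16}$, a non-increasing sequence $(\varepsilon_n)$ with $\varepsilon_0\leq 1$ and $\varepsilon_{n+1}\leq 2\alpha^{-M_n/4}/s$, and critical regions $(\mathcal I_n)$ as in Definition~\ref{defn: critical sets}, such that $(\mathcal F)_n$ and $(\mathcal E)_n$ hold for every $n$, $R_\omega\circ F$ possesses a unique SNA $\phi^+$ and a unique SNR $\phi^-$ whose associated measures $\mu_{\phi^\pm}$ are the only $f$-invariant ergodic measures, and $R_\omega\circ F$ is minimal. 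Putting $\Delta_F\=\T^1\setminus N_F$, the remaining three bullets of the definition of $\mathcal V_\omega$ are then met, so that $R_\omega\circ F\in\mathcal V_\omega$ for every $\omega\in\Delta_F$; and $\mathcal U$ is non-empty, since by \cite[Theorem~2.1]{Jager} it may be taken to contain maps as in \eqref{example 1} with $\alpha$ large.

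The conceptual input being imported from \cite{Jager,jager2013}, the remaining work is verificational, and this is where I expect the main obstacle. One has to match the partially intrinsic axiomatisation of Section~\ref{subsec: basic setting and notation} with what those references literally state or prove: the recursive construction of $\mathcal I_{n+1}$ via $f^{M_n-1}(\mathcal A_n)\cap f^{-(M_n+1)}(\mathcal B_n)$, the growth bounds on $(M_n)$ and $(\varepsilon_n)$, the slow-recurrence (Diophantine-type non-resonance) conditions $(\mathcal F1)_n$ and $(\mathcal F2)_n$, and the smallness bound \eqref{estimate-eps-m} -- several of which are not headline statements there but have to be read off from intermediate steps of the multiscale argument. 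One must also confirm that the parameter exclusion enforcing $(\mathcal F1)_n$ costs at most Lebesgue measure $\delta$ in $\omega$, uniformly over a fixed $\mathcal C^1$-neighbourhood of $F_0$, and that the constants $\alpha_*, \varepsilon_*, K_0, \kappa$ can be chosen compatibly with each other and with $b>\sqrt{5/6}$. Beyond this bookkeeping, the proposition is a restatement of \cite[Theorem~2.1]{Jager}.
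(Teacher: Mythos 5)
Your proposal takes essentially the same route as the paper: the paper offers no independent proof of this proposition but imports it wholesale from \cite[Theorem~2.1]{Jager} (together with the multiscale analysis of \cite{Jager,jager2013}), having deliberately defined $\mc V_\omega$ in a partially intrinsic way precisely so that the output of that construction satisfies the four defining bullets. Your added observations---that \eqref{axiom: 2}--\eqref{eq: lipschitz x in E} are $\mc C^1$-open conditions on $F$ alone, and that the parameter exclusion in $\omega$ costs measure at most $\delta$---are exactly the translation bookkeeping the paper leaves implicit, so the proposal is consistent with the paper's treatment.
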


We may as well take another point of view and fix the rotation $R_\w$ while--as explained in the introduction--looking at whole families of maps in $\mathcal F_\w$.
Here as well, it turns out that members of these families typically lie in $\mc V_\w$.

More precisely, consider the following set of differentiable one-parameter families
\[\mathcal P:=\{(F_\tau)_{\tau\in\T^1}\ |\ F_\tau\in\mathcal F\ \textrm{and}\ (\tau,\theta,x)\mapsto F_\tau(\theta,x)\  \textrm{is}\ \mathcal C^1 \ \textrm{for all} \ \tau\in\T^1\}.\]
We say $\omega\in\T^1$ satisfies the \emph{Diophantine} condition with positive constants $\gamma,\nu$ if
\begin{equation}\label{condition_diophantine}
d(n\omega, 0)>\gamma\cdot |n|^{-\nu},\ \forall n\in\Z\setminus \{0\}.
\end{equation}
By $\mathcal D(\gamma,\nu)$, we denote the set of frequencies $\omega\in\T^1$ which satisfy  (\ref{condition_diophantine}). Then the following holds.

\begin{prop}[{\cite[Theorem~3.1]{jager2013}}]\label{prop: tobias etds}
 Given $\delta>0$ as well as $\gamma,\nu>0$, there exists a non-empty $\mathcal C^1$-open set $\mathcal U=\mathcal U(\gamma,\nu,\delta)\subseteq \mathcal P$
 with the following property.
 For all $(F_\tau)_{\tau\in\T^1}\in\mathcal U$ and all $\omega\in \mathcal D(\gamma,\nu)$
 there exists a set $\Lambda^{(F_\tau)}(\omega)\subseteq \T^1$ with $\textrm{Leb}_{\T^1}(\Lambda^{(F_\tau)}(\omega))\geq1-\delta$ and such that for any $\tau\in\Lambda^{(F_\tau)}(\omega)$
 we have $R_\omega\circ F_\tau\in \mathcal{V}_\omega$.
\end{prop}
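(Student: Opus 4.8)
Since this is \cite[Theorem~3.1]{jager2013}, a fully self-contained argument would reproduce the multiscale analysis of \cite{Jager,jager2013} (in the tradition of \cite{benedicks/carleson:1991,young:1997,bjerkloev:2005}); I only sketch its architecture. The plan is to fix a suitable ``model'' one-parameter family, take for $\mc U$ a small $\mc C^1$-neighbourhood of it, and then, for each fixed $\w\in\mc D(\gamma,\nu)$, to build by induction on the scale $n$ both the critical regions $\I_0,\I_1,\dots$ of Definition~\ref{defn: critical sets} and a decreasing chain of parameter sets $\Lambda_0\supseteq\Lambda_1\supseteq\cdots$ such that for $\tau\in\Lambda_n$ the map $f=R_\w\circ F_\tau$ verifies $(\mc F)_n$ and $(\mc E)_n$; one then puts $\Lambda^{(F_\tau)}(\w)=\bigcap_n\Lambda_n$. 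As a model one may take a perturbation of a map of the form \eqref{example 1} (with $q$ fixed, $\alpha$ large, and $V$ having exactly $\mc N$ suitable windows) or of the driven Arnold map: such a map carries disjoint intervals $C,E$ and a first critical region $\I_0$ consisting of exactly $\mc N$ intervals of length $<\eps_0$ off which the graph of $f_\theta$ lies in $\mathrm{int}(C)$ away from $(e^-,e^+)$, and the derivative bounds \eqref{eq: lipschitz x}--\eqref{eq: lipschitz x in E} hold. Each of these requirements --- the $\mc C^1$-estimates, and ``$\I_0$ has exactly $\mc N$ components across which $f_\theta$ sweeps transversally from $C$ to $E$'' --- is open in the $\mc C^1$-topology on $\mc P$, uniformly in $\w$, so it persists throughout a $\mc C^1$-open $\mc U$; shrinking $\mc U$ one further arranges a uniform lower bound on the speed $|\partial_\tau|$ with which the critical branch moves in $\tau$, which is what later makes the parameter exclusion quantitative.

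The inductive step is the core. Given $\I_0,\dots,\I_n$ small with $\mc N$ components and $(\mc F)_n$, one analyses, for $\tau$ in the current good set, the orbit segment of length $\sim M_n$ joining the pull-back $\A_n=(\I_n-(M_n-1)\w)\times C$ of the critical region --- which contracts --- to its push-forward $\B_n=(\I_n+(M_n+1)\w)\times E$ --- which expands --- and defines $\I_{n+1}=\mathrm{int}\,\pi_1\bigl(f^{M_n-1}(\A_n)\cap f^{-(M_n+1)}(\B_n)\bigr)$. The bounds \eqref{eq: lipschitz x in C}--\eqref{eq: lipschitz x in E} give fibrewise contraction, resp.\ expansion, by $\alpha^{-1}$ per step along the parts of the segment spent in $C$, resp.\ $E$, while $(\mc F1)_n$, $(\mc F2)_n$ together with \eqref{axiom: 2} guarantee that away from $\mc W_n^\pm$ the dynamics is of this hyperbolic type; a definite fraction of each segment --- bounded below by $b_n$, hence ultimately by $b>\sqrt{5/6}$ --- thus produces net expansion, so $f^{M_n-1}$ expands transversally on the relevant set by a factor exponentially large in $M_n$. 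Combined with the fibrewise Lipschitz bound \eqref{eq: lipschitz theta} this forces $|\I_{n+1}^\iota|\lesssim S\alpha^{-M_n/4}\leq\eps_{n+1}$, the size part of $(\mc E)_{n+1}$; monotonicity of the $\theta$-motion along the segment (again from hyperbolicity plus transversality) then shows $\I_{n+1}$ has exactly $\mc N$ components, one per component of $\I_n$, so in particular $\I_{n+1}\neq\emptyset$. This is the step forcing all the constants $K_0,\kappa,(M_n),(\eps_n),\alpha,S,\mc N$ to be mutually compatible: the admissibility relations $M_{n+1}\leq 2\alpha^{M_n/16}$ and $\eps_{n+1}\leq 2\alpha^{-M_n/4}/s$ of Definition~\ref{def: recurrence properties of critical intervals} are precisely what these estimates demand, and $b>\sqrt{5/6}$ is the combinatorial threshold ensuring that the losses near the critical regions never swamp the expansion at any scale.

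For the parameter exclusion one observes that, with $\w\in\mc D(\gamma,\nu)$ fixed, $(\mc F1)_n$ can fail for $f=R_\w\circ F_\tau$ only if some component of $\I_n(\tau)$ comes within $\sim\eps_n$ of one of the $2K_nM_n$ translates of a component by $\w,2\w,\dots,2K_nM_n\w$; since $\tau\mapsto\I_n(\tau)$ moves with speed bounded below and $\w$ being Diophantine keeps these finitely many translates a controlled distance apart, the set of $\tau$ newly violating $(\mc F1)_n$ has Lebesgue measure $\lesssim\mc N^2K_nM_n\eps_n$, and $(\mc F2)_n$ is handled analogously using the at most $\sum_{j<n}(M_j+1)\mc N$ intervals comprising $\mc W_{n-1}^\pm$. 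Because $(M_n)$ is super-exponential while $\eps_{n+1}\leq 2\alpha^{-M_n/4}/s$ is super-exponentially small --- cf.\ $\sum_n\eps_n^{1/2}<1/16$ in \eqref{estimate-eps-m} --- the resulting total converges and can be made $<\delta$ by taking $\eps_0$ small enough and $\alpha$ large; hence $\Lambda^{(F_\tau)}(\w)=\bigcap_n\Lambda_n$ satisfies $\Leb_{\T^1}(\Lambda^{(F_\tau)}(\w))\geq 1-\delta$, and by construction every such $\tau$ makes $f$ satisfy \eqref{axiom: 2}--\eqref{eq: lipschitz x in E}, $(\mc F)_n$ and $(\mc E)_n$ for all $n$.

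It remains to extract, for $\tau\in\Lambda^{(F_\tau)}(\w)$, the SNA $\phi^+$ and SNR $\phi^-$, to see that $\mu_{\phi^+},\mu_{\phi^-}$ are the only ergodic measures, and that $f$ is minimal. One obtains $\phi^\pm$ as the limits of the ``expanding'' and ``contracting'' admissible branches tracked through the construction: the uniform control over all scales shows these limits exist as measurable invariant graphs, that the Birkhoff averages of $\log|\partial_x f_\theta|$ along their orbits stay bounded away from $0$ from below, resp.\ above, by a positive multiple of $\log\alpha$ (this is where the margin $b>\sqrt{5/6}>\tfrac12$ is used), so $\lambda(\phi^+)>0>\lambda(\phi^-)$, and that $\phi^\pm$ are non-continuous because the shrinking critical regions force the graphs to jump across the gap between $C$ and $E$ on a dense set of $\theta$. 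Minimality follows from denseness of the critical orbit together with irrationality of $\w$, and the fact that $\mu_{\phi^+},\mu_{\phi^-}$ exhaust the ergodic measures from the observation that any ergodic measure projects to $\Leb_{\T^1}$ and is then carried by one of the two admissible branches; the details here are exactly those of \cite{Jager,jager2013}. Altogether $R_\w\circ F_\tau\in\mc V_\w$. I expect the genuine difficulty to be concentrated entirely in the inductive step: keeping the geometry of $\I_{n+1}$ (non-emptiness, exactly $\mc N$ components, exponential smallness) under control simultaneously with the recurrence conditions $(\mc F)_{n+1}$ on a large set of parameters and with the time-fraction bound $b>\sqrt{5/6}$, across all scales, under one consistent choice of constants.
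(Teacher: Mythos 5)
There is nothing in the paper to compare your argument against: Proposition~\ref{prop: tobias etds} is stated as an import of \cite[Theorem~3.1]{jager2013} and the paper supplies no proof, only the remark (Section~\ref{subsec: basic setting and notation}) that the dynamical conditions defining $\mc V_\w$ are consequences of the $\mc C^1$-estimates established by the multiscale analysis of \cite{Jager,jager2013}. Your sketch reproduces exactly the architecture that analysis has --- inductive construction of the critical regions $\I_n$ together with a decreasing chain of parameter sets on which $(\mc F)_n$ and $(\mc E)_n$ hold, a measure estimate for the excluded parameters driven by the super-exponential growth of $M_n$ against the super-exponential decay of $\eps_n$, and extraction of the SNA/SNR pair from the contracting and expanding branches --- so as an outline it is consistent with what the citation delivers. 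Two caveats. First, a sign slip: you conclude $\lambda(\phi^+)>0>\lambda(\phi^-)$, but $\phi^+$ is the \emph{attractor}, so the construction must yield $\lambda(\phi^+)<0<\lambda(\phi^-)$ (cf.\ the convention stated after the definition of the Lyapunov exponent in the introduction). Second, as you yourself concede, the entire quantitative content --- that the inductive step closes with one consistent choice of $\alpha, K_0,\kappa,(M_n),(\eps_n)$ keeping $\I_{n+1}$ non-empty with exactly $\mc N$ components while the excluded parameter measure sums to less than $\delta$, and the derivation of unique ergodicity off the two graphs and of minimality --- is asserted rather than proved; your stated mechanism for minimality (``denseness of the critical orbit plus irrationality of $\w$'') is in particular not the argument used in \cite{jager2013}, where minimality is tied to the behaviour of the fibred rotation number. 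Since the paper itself treats this as a black box, your proposal is an acceptable gloss on the citation but not a proof.
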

\section{Rectifiability}
\label{sec: rectifiability}
From now on, we only consider the SNA $\phi^+$ of the map $f\in\mathcal{V}_\omega$ for $\omega\in\T^1\setminus\Q$.
All of the results and proofs which are only stated in terms of $\phi^+$ hold analogously for the repeller $\phi^-$
as can be readily seen by considering $f^{-1}$ instead of $f$.

Our analysis of the geometry of the measure supported on the SNA
relies on the fact that outside a Lebesgue null set, we can decompose $\phi^+$ in countably many
Lipschitz graphs.
Let us briefly sketch the argument for the existence of such a decomposition.

For given $\theta_0,\theta_1\in \T^1$, observe that the invariance of
$\phi^+$ trivially implies
\begin{align}\label{eq: intuitive idea}
d(\phi^+(\theta_0),\phi^+(\theta_1))=d(f_{\theta_0-n\w}^n\left(\phi^+(\theta_0-n\w)\right),f_{\theta_1-n\w}^n\left(\phi^+(\theta_1-n\w)\right)).
\end{align}
For simplicity, let us discuss the case $n=1$. Clearly,
\begin{align*}
d(f_{\theta_0-\w}\left(\phi^+(\theta_0-\w)\right),f_{\theta_1-\w}\left(\phi^+(\theta_1-\w)\right))&\leq
d(f_{\theta_0-\w}\left(\phi^+(\theta_0-\w)\right),f_{\theta_0-\w}\left(\phi^+(\theta_1-\w)\right))\\
&\phantom{\leq }+
d(f_{\theta_0-\w}\left(\phi^+(\theta_1-\w)\right),f_{\theta_1-\w}\left(\phi^+(\theta_1-\w)\right)).
\end{align*}
Equation \eqref{eq: lipschitz theta} yields that the second summand is bounded by $Sd(\theta_0,\theta_1)$
while \eqref{eq: lipschitz x in C} gives that the first one can be considered small (less than $\alpha^{-1}$)
whenever $\phi^+(\theta_i-\w)\in C$ ($i=0,1$).
In view of \eqref{eq: intuitive idea}, this suggests that in order to get Lipschitz continuity of $\phi^+$ over some subset $\Omega \ssq \T^1$,
we have to ensure that big portions of the orbit segments $\{\phi^+(\theta_i-n\w),\ldots,\phi^+(\theta_i-\w)\}$ ($i=0,1$)
lie in $C$ for each two $\theta_0,\theta_1\in \Omega$.
As $\lam(\phi^+)<0$, \emph{most} parts of $\phi^+$ have to lie in $C$ so that
for almost all $\theta_0,\theta_1\in \T^1$ there should be a strictly increasing sequence $n_\ell$ with
$\phi^+(\theta_0-n_\ell),\phi^+(\theta_1-n_\ell)\in C$.
Now, according to \eqref{axiom: 2}, a natural obstruction for the segments $\{\phi^+(\theta_i-n_\ell\w),\ldots,\phi^+(\theta_i-\w)\}$ (which start in $C$)
to have a large intersection with $C$
is a high frequency of visits to the critical region.

However, when restricting to sets
\begin{align*}
\Omega_j= \T^1 \setminus \bigcup_{k=j}^{\infty} \bigcup_{l=0}^{2K_kM_k} \I_k+l\w \qquad(j\in \N),
\end{align*}
we can derive sufficient upper bounds for these frequencies.

Observe that $K_kM_k\leq2 K_0 \kappa^k \cdot \alpha^{M_{k-1}/16}$
while $|\I_k^\iota|<\eps_k\leq 2 \alpha^{-M_{k-1}/4}/s$.
Having in mind that $\I_k$ consists of $\mc N$ connected components $\I_k^\iota$ and that $M_k$ grows super-exponentially,
we easily get the following rough estimate
\begin{align}\label{eq: measure of Omegaj}
\operatorname{Leb}_{\T^1}\left(\bigcup_{k=j}^\infty\bigcup_{l=0}^{2K_kM_k}\I_k+l\w\right)<
 \sum_{k=j}^\infty (2K_kM_k+1) \mc N\eps_k <\sum_{k=j}^\infty \eps_k^{1/2},
\end{align}
and hence $\operatorname{Leb}_{\T^1} (\Omega_j)>0$ for large enough $j$.

We still have to take care of the complement of the $\Omega_j$
\begin{align*}
 \Omega_\infty = \T^1\setminus \bigcup_{j\in \N} \Omega_j=\bigcap_{i=1}^{\infty}
\bigcup_{k=i}^{\infty}
\bigcup_{l=0}^{2K_kM_k}\I_k+l\w.
\end{align*}
However, due to \eqref{eq: measure of Omegaj}, we have $\operatorname{Leb}_{\T^1}\left(\Omega_\infty\right)=0$.

The next proposition is the basis of all our investigation of $\phi^+$ in this work.
Its proof is given in the last section.
However, the statement should seem plausible to the reader in the light of the above discussion.
\begin{prop}\label{prop: decomposition in lipschitz graphs}
Let $f\in \mc V_\omega$. There is a $\textrm{Leb}_{\T^1}$-null set $\mc M$ and there are $L_j>0$ $(j\in\N)$
such that the following is true.
If $\theta,\theta'\in \Omega_j\setminus \mc M$, then $\left|\phi^+(\theta)-\phi^+(\theta')\right|\leq L_j
  d(\theta,\theta')$.
\end{prop}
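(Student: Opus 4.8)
The plan is to quantify the heuristic in the preceding discussion by combining the telescoping identity \eqref{eq: intuitive idea} with the two competing estimates \eqref{eq: lipschitz theta} and \eqref{eq: lipschitz x in C}. Fix $j\in\N$ and $\theta,\theta'\in\Omega_j$; set $d=d(\theta,\theta')$. Writing $d(\phi^+(\theta),\phi^+(\theta'))=d\big(f^n_{\theta-n\w}(\phi^+(\theta-n\w)),f^n_{\theta'-n\w}(\phi^+(\theta'-n\w))\big)$ for each $n$, I would split the error at each backward step into a ``$\theta$-error'' controlled by $S\cdot d$ (which, because the composition contracts on $C$ at rate $\leq\alpha^{-1}$, only accumulates as a geometric series of size $O(S d)$ along the steps spent in $C$) and a ``$x$-error'' which is multiplied by $\leq\alpha^{-1}$ whenever the relevant fibre-point lies in the contraction interval $C$ and by $\leq\alpha^{2}$ otherwise (using \eqref{eq: lipschitz x}). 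So if along the first $n$ backward iterates the number of indices $\ell$ with $\phi^+(\theta-\ell\w)\notin C$ or $\phi^+(\theta'-\ell\w)\notin C$ is $b_n$ and the number in $C$ is $g_n=n-b_n$, the $x$-error contracts by roughly $\alpha^{-g_n+2b_n}$ while the diameter of the fibre is at most $1$; one then needs $g_n-2b_n\to\infty$ to kill the initial fibre distance, and $\sum S\cdot d$ over the $C$-steps to stay bounded (it does, being a geometric sum). The upshot is a bound of the form $d(\phi^+(\theta),\phi^+(\theta'))\leq \alpha^{2b_n-g_n}+L_j' S d$, and letting $n\to\infty$ gives Lipschitz continuity with constant $L_j$ depending only on $j$ — provided the bad-step count $b_n$ along backward orbits of points in $\Omega_j$ is controlled.

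The heart of the matter is therefore a combinatorial/recurrence estimate: for $\theta\in\Omega_j$, the backward orbit $\{\theta-\ell\w\}$ enters the critical region $\mc I_0$ only with controlled frequency, and more importantly, between consecutive visits to $C$ the orbit of $\phi^+$ spends only a controlled number of steps outside $C$. This is exactly where the slow-recurrence axioms $(\mc F1)_n$, $(\mc F2)_n$, the nested structure $\mc I_{n+1}\subseteq\mc I_n$, the size bounds $|\mc I_n^\iota|<\eps_n\leq 2\alpha^{-M_{n-1}/4}/s$, and the growth conditions $M_{n+1}\leq 2\alpha^{M_n/16}$, $K_n=K_0\kappa^n$ enter. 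The definition of $\Omega_j=\T^1\setminus\bigcup_{k\geq j}\bigcup_{l=0}^{2K_kM_k}\mc I_k+l\w$ is tailored precisely so that a backward orbit starting in $\Omega_j$ cannot linger near the deeper critical regions $\mc I_k$ ($k\geq j$) long enough to produce too many consecutive bad steps; for the finitely many shallow regions $\mc I_0,\dots,\mc I_{j-1}$ one absorbs the (finitely bounded) contribution into the constant $L_j$, which is why $L_j$ is allowed to blow up as $j\to\infty$. I expect this multiscale bookkeeping — tracking how an excursion triggered by a visit to $\mc I_k$ propagates through the scales and bounding the total number of $x$-bad steps along a backward orbit segment that starts and ends in $C$ — to be by far the main obstacle, and it is plausibly where the combinatorial machinery imported from \cite{fuhrmanngrogerjager14} does the work.

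Two further points need attention. First, \eqref{eq: intuitive idea} holds only up to a Leb-null set of $\theta$ (invariance of $\phi^+$ is only almost-sure), so one must fix a Leb-null set $\mc M$ — containing the orbit of the exceptional set together with the $\w$-orbit of any null sets arising in the recurrence arguments — outside of which all the pointwise identities are valid for all backward iterates simultaneously; this is a routine countable-union argument. Second, one should make sure that there genuinely exist, for $\theta,\theta'\in\Omega_j\setminus\mc M$, arbitrarily large $n$ with $\phi^+(\theta-n\w),\phi^+(\theta'-n\w)\in C$ (needed to start the telescoping from a controlled position); since $\lam(\phi^+)<0$ forces $\phi^+$ to lie in $C$ on a set of full density along almost every backward orbit (via Birkhoff on $\mu_{\phi^+}$ together with \eqref{axiom: 2}, \eqref{eq: lipschitz x in C}), this holds after enlarging $\mc M$ once more. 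Assembling these ingredients — null-set cleanup, existence of good return times, the geometric-series control of the $\theta$-error, and the multiscale bound on bad-step counts — yields the claimed uniform Lipschitz estimate on $\Omega_j\setminus\mc M$ with a constant $L_j$ independent of the chosen pair of points.
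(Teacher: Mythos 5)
Your overall architecture is indeed the paper's: telescope backwards from a good return time, split each step into an $x$-contribution (contracted on good steps, expanded by at most $\alpha^{2}$ otherwise) and a $\theta$-contribution of size $S\,d(\theta,\theta')$, control the frequency of good steps by the combinatorics imported from \cite{fuhrmann2014,fuhrmanngrogerjager14} (Corollary~\ref{cor: times in C}, Proposition~\ref{prop: pk < ik}), absorb the finitely many scales below $j$ into $L_j$, and discard a Birkhoff null set $\mc M$. The genuine gap sits in the step you declare routine: the existence of simultaneous good return times. You claim that $\lam(\phi^+)<0$ forces $\phi^+$ to lie in $C$ with \emph{full} density along almost every backward orbit; this is false, and the correct soft consequence of $\lam(\phi^+)<0$ together with \eqref{eq: lipschitz x} and \eqref{eq: lipschitz x in E} is only $\textrm{Leb}_{\T^1}(\{\theta\:\phi^+(\theta)\notin E\})>1/3$, i.e.\ the visit frequency to $E$ could be as large as $2/3$. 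But to start the telescoping you need infinitely many $n$ at which \emph{simultaneously} $\phi^+(\theta-n\w)\in C$, $\phi^+(\theta'-n\w)\in C$ and both $\theta-n\w,\theta'-n\w$ avoid $\bigcup_{k\in\N}\bigcup_{m=0}^{M_k}\I_k-m\w$ (otherwise $(\mc B1)$ fails and Corollary~\ref{cor: times in C} cannot be invoked). By Birkhoff this requires $2\,\textrm{Leb}_{\T^1}\bigl((\phi^+)^{-1}(E)\bigr)+2\,\textrm{Leb}_{\T^1}\bigl(\bigcup_{k}\bigcup_{m=0}^{M_k}\I_k-m\w\bigr)<1$, i.e.\ (using \eqref{estimate-eps-m}) $\textrm{Leb}_{\T^1}\bigl((\phi^+)^{-1}(E)\bigr)<7/16$, which the soft bound does not give. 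The paper closes exactly this hole with the quantitative estimate $\textrm{Leb}_{\T^1}(\{\phi^+\notin E\})\geq b-1/3$, $b>\sqrt{5/6}$ (Proposition~\ref{prop: b}), whose proof is not soft: it produces a point of $\bigcap_n \textrm{cl}(f^{M_n}(\mc A_n))$ with a controlled backwards derivative growth via Lemmas~\ref{lem: duration of stay in contracting/expanding regions} and~\ref{lem: estimate for times spent in contracting/expanding regions}, and then uses the semi-uniform ergodic theorem plus the fact that $\mu_{\phi^\pm}$ are the only ergodic measures to force $\lam(\phi^+)\leq -(3b-2)\log\alpha$. Without this ingredient your construction of return times, and hence the whole argument, does not get off the ground.

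A second, smaller omission is specific to circle fibres: knowing that both fibre points lie in $C$ does not by itself give $d(f_\theta(x),f_\theta(x'))\leq\alpha^{-1}d(x,x')$, because the distance-minimizing arc between $x$ and $x'$ may run through $E$. The paper therefore assumes $d(\theta,\theta')<|E|/(4S)$ and runs a short segment argument (push forward the arc $I'\ssq C$, note $|f_{\theta-\w}(I')|\leq\alpha^{-1}|C|$ and that the connecting arc $I$ has $|I|<1/2$) before \eqref{eq: lipschitz x in C} may be applied to distances; your sketch treats the fibre as if it were an interval. Finally, a step must be counted good only when it is good for \emph{both} points and both base points avoid $\I_0$, and the convergence of your error series hinges on the quantitative rate $\wp^m\geq(2b^2-1)m-2$, so that the per-step exponent $2-3(2b^2-1)$ is negative precisely because $b^2>5/6$; deferring that count to the quoted combinatorics is legitimate, but as written you never verify that the net exponent is negative, which is the inequality the whole Lipschitz bound rests on.
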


\label{sec: hausdorff and pointwise dimension}
Now, taking this statement for granted, we straightforwardly get our main result (cf. \cite{GroegerJaeger2012SNADimensions}).
\begin{proof}[Proof of Theorem~\ref{prop: dimensions_subgraphs}]
  For each $j\in\N\cup\{\infty\}$ set
  $\psi_j\=\left.\phi^+\right|_{\tilde\Omega_j}$, where $\tilde \Omega_j=\Omega_j\setminus \mc M$ ($j\in \N$) and $\tilde \Omega_\infty=\Omega_\infty\cup \mc M$.  First, we want to show
  that the graph $\Psi_j=\{(\theta,\psi_j(\theta))\:\theta\in\tilde\Omega_j\}$ is the image of a bi-Lipschitz continuous
  function $g_j$ for all $j\in\N$.

  Define
  $g_j:\tilde\Omega_j\to\tilde\Omega_j\times\T^1$ via
  $\theta\mapsto(\theta,\psi_j(\theta))$ for all
  $j\in\N$. We have that $g_j(\tilde \Omega_j)=\Psi_j$ and
  $d(g_j(\theta),g_j(\theta'))\geq d(\theta,\theta')$
  for all $\theta,\theta'\in\tilde\Omega_j$.
 Further, Proposition~\ref{prop: decomposition in lipschitz graphs} yields
  $d(\psi_j(\theta),\psi_j(\theta'))=d(\phi^+(\theta),\phi^+(\theta'))<L_j d(\theta,\theta')$
  for all $\theta,\theta'\in\tilde\Omega_j$.
 Hence, $g_j$ is bi-Lipschitz continuous for each $j\in\N$.

Now, by definition, $\mu_{\phi^+}$ is absolutely continuous
with respect to $\left.\mathcal H^1\right|_{\Phi^+}$. We have that
$\mu_{\phi^+}(\Psi_\infty)=0$ and therefore $\mu_{\phi^+}$ is also
absolutely continuous with respect to $\left.\mathcal
  H^1\right|_{\Phi^+\backslash\Psi^\infty}$. Since
$\Phi^+\backslash\Psi_\infty=\bigcup_{j\in \N}\Psi_j$ is a
countably $1$-rectifiable set we get that $\mu_{\phi^+}$ is $1$-rectifiable,
too.
\end{proof}

\section{Proof of Proposition~\ref{prop: decomposition in lipschitz graphs}}
We now turn to the proof of Proposition~\ref{prop: decomposition in lipschitz graphs}.
It is based on both the $\mc C^1$-estimates and the dynamical assumptions that define the set $\mc{V}_\omega$ (see Section~\ref{subsec: basic setting and notation}).
Recall that we consider the fixed map $f=R_\omega\circ F\in\mathcal V_\omega$, where $\omega\in\T^1\setminus\Q$ and $F\in\mathcal F$.
As before, we only consider the SNA $\phi^+$ of $f$.

A crucial point in our analysis is to control the frequency of visits a forward orbit pays to the interval of contraction.
We hence study the following quantities for $n,N\in \N$
\begin{align*}
\mc P_n^N(\theta,x) &= \# \{\ell \in [n,N-1]\cap \N_0\:f_{\theta}^\ell(x) \in C \text{ and } \theta+\ell\w\notin \I_0\}.
\end{align*}
In order to get lower bounds on the $\mc P_n^N(\theta,x)$ for certain $\theta$ and $x$, we have to apply a number
of combinatorial lemmas.
Their proofs can be found in \cite{fuhrmann2014,fuhrmanngrogerjager14}.

In the following, let $\mc Z^-_n\= \bigcup_{j=0}^{n}
\bigcup_{l=-(M_j-2)}^{0}\mc I_j+l\w$ for $n\in\N_0$ and set, for the sake of a convenient notation, $M_{-1}\=0$,  $\I_{-1}\=\I_0$, as well as $\mc Z^-_{-1}\=\emptyset$.
\begin{defn}
We say that $(\theta,x)$ verifies $(\mc B1)_n$
if
\begin{enumerate}[$(\mc B1)_n$]
 \item $x \in C$ and $\theta \notin \mc Z^-_{n-1}$. \label{axiom: B1}
\end{enumerate}
\end{defn}
\begin{lem}[{cf. \cite[Lemma~4.4]{fuhrmann2014}}]
\label{lem: duration of stay in contracting/expanding regions}
Let $f\in \mathcal{V}_\omega$, $n\in \N_0$ and assume $(\theta,x)$ satisfies
$(\mc B1)_n$.
Let $\mc L$ be the first time $l$
such that $\theta+l\w \in \I_n$
and let $0<\mc L_{1}<\ldots <\mc L_{N}=\mc L$ be all those
times $m\leq \mc L$ for which $\theta+m \w \in \I_{n-1}$. Then
$f^{\mc L_{i}+M_{n-1}+2}(\theta,x)$ satisfies
$(\mc B1)_n$ for each $i=1,\ldots,N-1$ and the following implication holds
\begin{align*}
 f^k_\theta(x) \notin C \Rightarrow \theta+k\w \in \mc{W}_{n-1}^+
\quad (k = 1,\ldots, \mc L).
\end{align*}
\end{lem}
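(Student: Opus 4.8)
The plan is to prove both assertions simultaneously by strong induction on $n$; the backbone is combinatorial, resting on the arithmetic conditions $(\mc F1)$, $(\mc F2)$ and the nesting $\I_n\ssq\I_{n-1}$, and the one genuinely dynamical ingredient I would import from \cite[Lemma~4.4]{fuhrmann2014} (see also \cite{fuhrmanngrogerjager14}). For the base case $n=0$ one has $\mc Z^-_{-1}=\emptyset$, $\I_{-1}=\I_0$ and $\mc W^+_{-1}=\emptyset$, so $(\mc B1)_0$ just says $x\in C$, necessarily $N=1$ and $\mc L_1=\mc L$, and the $(\mc B1)_n$-propagation is vacuous. Since $C\cap E=\emptyset$ and $E=[e^-,e^+]$, $x\in C$ gives $x\notin(e^-,e^+)$; as $\theta+l\w\notin\I_0$ for $0\le l\le\mc L-1$, \eqref{axiom: 2} yields $f^1_\theta(x)\in\operatorname{int}(C)$ and then, inductively, $f^k_\theta(x)\in\operatorname{int}(C)\ssq C$ for all $k\in\{1,\dots,\mc L\}$, so the implication holds with empty conclusion.

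For the inductive step, assume the statement for $n-1$ and let $(\theta,x)$ satisfy $(\mc B1)_n$; since $\mc Z^-_{n-2}\ssq\mc Z^-_{n-1}$ it also satisfies $(\mc B1)_{n-1}$. I would first record two arithmetic facts. (i) By $(\mc F1)_{n-1}$, $\I_{n-1}$ is disjoint from $\I_{n-1}+k\w$ for $1\le k\le 2K_{n-1}M_{n-1}$; hence any two distinct visits of $(\theta+l\w)_{l\ge0}$ to $\I_{n-1}$ are more than $M_{n-1}+2$ apart, so there is no such visit in $(\mc L_i,\mc L_i+M_{n-1}+2]$, and for $k\in[\mc L_i+1,\mc L_i+M_{n-1}+1]$ one has $\theta+k\w\in\bigcup_{l=1}^{M_{n-1}+1}\I_{n-1}+l\w\ssq\mc W^+_{n-1}$, making the implication automatic there. (ii) $\I_{n-1}+(M_{n-1}+2)\w$ is disjoint from $\mc Z^-_{n-1}$: for the $\I_{n-1}$-part of $\mc Z^-_{n-1}$ this is $(\mc F1)_{n-1}$ (the relevant offsets lie in $[M_{n-1}+2,2M_{n-1}]\ssq[1,2K_{n-1}M_{n-1}]$), and for the $\I_j$-parts with $j\le n-2$ it follows from $\I_{n-1}\ssq\I_j$, the inclusion $\mc Z^-_{n-2}-\w\ssq\mc W^-_{n-2}$, and $(\mc F2)_{n-1}$; the same reasoning with $\mc W^+_{n-2}$ gives $\I_{n-1}\cap\mc W^+_{n-2}=\emptyset$. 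I would then run a secondary induction on $N$. If $N=1$, then $\mc L_1=\mc L$ is already the first visit to $\I_{n-1}$ and the level-$(n-1)$ hypothesis applied to $(\theta,x)$ gives the implication on $[1,\mc L]$ with $\mc W^+_{n-2}\ssq\mc W^+_{n-1}$, the propagation claim being empty. If $N\ge 2$, the level-$(n-1)$ statement applied to $(\theta,x)$ gives the implication on $[1,\mc L_1]$ and, at $k=\mc L_1$, forces $f^{\mc L_1}_\theta(x)\in C$ since $\theta+\mc L_1\w\in\I_{n-1}$ avoids $\mc W^+_{n-2}$.

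The key step will then be to show that $(\theta',x')\=f^{\mc L_1+M_{n-1}+2}(\theta,x)$ satisfies $(\mc B1)_n$: its base coordinate lies in $\I_{n-1}+(M_{n-1}+2)\w$, disjoint from $\mc Z^-_{n-1}$ by (ii), while its fibre coordinate lies in $C$ because the orbit segment issuing from $(\theta+\mc L_1\w,f^{\mc L_1}_\theta(x))$---which starts in $C$ with base point in $\I_{n-1}$---is pushed into the expanding interval $E$ at step $M_{n-1}+1$ and back into $C$ at step $M_{n-1}+2$. Granting this, $(\theta',x')$ satisfies $(\mc B1)_n$, hence $(\mc B1)_{n-1}$; by (i) its first visit to $\I_n$ occurs at time $\mc L-\mc L_1-(M_{n-1}+2)$ and its visits to $\I_{n-1}$ before that are exactly $\mc L_i-\mc L_1-(M_{n-1}+2)$ for $i=2,\dots,N$---that is, $N-1$ of them---so the secondary induction hypothesis applies to $(\theta',x')$. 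Translating back by $\mc L_1+M_{n-1}+2$, it yields the implication on $[\mc L_1+M_{n-1}+3,\mc L]$ and the $(\mc B1)_n$-propagation at $\mc L_2,\dots,\mc L_{N-1}$; combining this with $[1,\mc L_1]$, with the range $[\mc L_1+1,\mc L_1+M_{n-1}+1]$ from (i), with the single index $k=\mc L_1+M_{n-1}+2$ just treated, and with the propagation at $\mc L_1$ (namely $(\theta',x')$ itself) proves the implication on all of $[1,\mc L]$ and the propagation at $\mc L_1,\dots,\mc L_{N-1}$, closing the induction.

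The main obstacle is precisely the fibre-in-$C$ claim of the key step: that an orbit reaching $\I_{n-1}$ while inside $C$ is necessarily expelled into $E$ within $M_{n-1}+1$ steps and returns to $C$ one step later. This is the dynamical core of the multiscale construction of the critical regions---it is exactly the mechanism encoded in the definition of $\I_n$ via $\mc A_{n-1}$ and $\mc B_{n-1}$---and it is the place where the strong expansion \eqref{eq: lipschitz x in E} is indispensable, together with \eqref{eq: lipschitz x in C}, the Lipschitz bounds \eqref{eq: lipschitz x}, \eqref{eq: lipschitz theta}, and the smallness $|\I_n^\iota|<\eps_n$. Rather than re-derive it I would cite \cite[Lemma~4.4]{fuhrmann2014} (cf. also \cite{fuhrmanngrogerjager14}); everything else above is bookkeeping with the arithmetic conditions and the nesting of the $\I_n$.
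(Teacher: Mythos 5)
First, be aware that the paper does not actually prove this lemma: it is imported, proof and all, from \cite[Lemma~4.4]{fuhrmann2014} (``Their proofs can be found in \dots''), so there is no in-paper argument to compare against. That said, your combinatorial scaffolding is essentially the right one and matches the standard scheme of the source: the double induction on $n$ and on $N$, the base case via \eqref{axiom: 2}, the disjointness facts (i) and (ii) extracted from $(\mc F1)$, $(\mc F2)$ and the nesting $\I_n\ssq\I_{n-1}$ (fact (ii) via the shift $\mc Z^-_{n-2}-\w\ssq\mc W^-_{n-2}$ is correct; note only that $\I_{n-1}\cap\mc W^+_{n-2}=\emptyset$ comes from $(\mc F1)$ plus nesting rather than from $(\mc F2)$), and the reduction of everything to the single claim that $f^{\mc L_i+M_{n-1}+2}(\theta,x)$ again satisfies $(\mc B1)_n$.

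The genuine problem is the ``dynamical core'' that you isolate and then decline to prove. As you state it, the mechanism is backwards: the orbit arriving in $C$ over $\I_{n-1}$ at time $\mc L_i$ with $i<N$ is \emph{not} ``pushed into $E$ at step $M_{n-1}+1$ and back into $C$ at step $M_{n-1}+2$''. Landing in $E$ at step $M_{n-1}+1$ is precisely the event that must be \emph{excluded}: since $(\mc F2)_{n-1}$ makes $\I_{n-1}-(M_{n-1}-1)\w$ avoid $\mc W^+_{n-2}$, the level-$(n-1)$ implication (together with $\mc L_i\geq M_{n-1}-1$, which follows from $\theta\notin\mc Z^-_{n-1}$) places $f^{\mc L_i-(M_{n-1}-1)}_\theta(x)$ in $C$, so $\left(\theta+\mc L_i\w,\,f^{\mc L_i}_\theta(x)\right)\in f^{M_{n-1}-1}(\A_{n-1})$; if in addition $f^{\mc L_i+M_{n-1}+1}_\theta(x)\in E$, the same point lies in $f^{-(M_{n-1}+1)}(\B_{n-1})$, whence $\theta+\mc L_i\w$ belongs to (the closure of) $\I_n$ by the very definition of $\I_n$ --- contradicting $\mc L_i<\mc L$. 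It is exactly because the orbit is therefore \emph{not} in $(e^-,e^+)$ at step $M_{n-1}+1$, while $(\mc F1)$/$(\mc F2)$ keep the base point $\theta+(\mc L_i+M_{n-1}+1)\w$ out of $\I_0$, that \eqref{axiom: 2} drops it into $\operatorname{int}(C)$ at step $M_{n-1}+2$. Had the orbit entered $E$, \eqref{axiom: 2} would give no information and the expansion \eqref{eq: lipschitz x in E} would destroy, not restore, the return to $C$. Finally, deferring this step to a citation of \cite[Lemma~4.4]{fuhrmann2014} is circular, since that is the very statement being proved; the step must be argued from the definition of $\I_n$ via $\A_{n-1}$ and $\B_{n-1}$ as above.
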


\begin{lem}[{cf. \cite[Lemma~4.8]{fuhrmann2014}}]\label{lem: estimate for times spent in contracting/expanding regions}
Let $f\in \mc{V}_\omega$ and assume $(\theta,x)$ verifies $(\mc B1)_{n}$ for $n\in \N$. Let $0<\mc L_1<\ldots <\mc L_N=\mc L$ be as in Lemma~\ref{lem: duration of stay in contracting/expanding regions}. Then, for each $i=1,\ldots,N$, we have
\begin{align}\label{eq: Lemma 4.8}
 \mathcal P_k^{\mc L_i}(\theta,x) \geq b_n (\mc L_i-k) \quad (k=0,\ldots,\mc L_i-1).
\end{align}
\end{lem}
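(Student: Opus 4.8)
The plan is an induction on $n$ that uses Lemma~\ref{lem: duration of stay in contracting/expanding regions} to reduce scale $n$ to scale $n-1$ (the statement concerns $n\in\N$, but it is convenient to include the elementary case $n=0$ as the base of the induction). Write $\mc L^{(m)}$ for the first time $l$ with $\theta+l\w\in\mc I_m$, and let $0=:\mc L_0<\mc L_1<\ldots<\mc L_N=\mc L$ be the successive entrance times of $\theta+l\w$ into $\mc I_{n-1}$ not exceeding $\mc L^{(n)}$, so $\mc L=\mc L_N=\mc L^{(n)}$ and the $\mc L_i$ are precisely those of the statement. I will freely use that $\mc P$ is additive, $\mc P_a^c(\theta,x)=\mc P_a^b(\theta,x)+\mc P_b^c(\theta,x)$ for $a\leq b\leq c$, and obeys the shift rule $\mc P_a^b(f^c(\theta,x))=\mc P_{a+c}^{b+c}(\theta,x)$; both are immediate from the definition and the cocycle identity. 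For $n=0$ the condition $(\mc B1)_0$ reduces to $x\in C$ and $\mc I_{-1}=\mc I_0$ forces $N=1$, $\mc L_1=\mc L^{(0)}$; since $\theta+\ell\w\notin\mc I_0$ for all $\ell<\mc L^{(0)}$ and $C$ is disjoint from $\textrm{int}(E)$, iterating \eqref{axiom: 2} gives $f_\theta^\ell(x)\in C$ for every $\ell\leq\mc L^{(0)}$, so every index in $[k,\mc L^{(0)}-1]$ is counted and $\mc P_k^{\mc L^{(0)}}(\theta,x)=\mc L^{(0)}-k=b_0(\mc L^{(0)}-k)$.

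For the step ($n\geq1$, assuming the claim at scale $n-1$), fix $(\theta,x)$ satisfying $(\mc B1)_n$; since $\mc Z^-_{n-2}\ssq\mc Z^-_{n-1}$ it also satisfies $(\mc B1)_{n-1}$. Put $\Delta_j\=\mc L_j-\mc L_{j-1}$. The ``Diophantine'' condition $(\mc F1)_n$ keeps successive visits of $\mc I_{n-1}$ more than $2K_{n-1}M_{n-1}$ apart, hence $\Delta_j>2K_{n-1}M_{n-1}$ for $j\geq2$; combined with $M_0\geq2$ and the monotonicity of $(M_m)$ this gives the key inequality $M_{n-1}+2\leq2M_{n-1}<\Delta_j/K_{n-1}$. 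By Lemma~\ref{lem: duration of stay in contracting/expanding regions} at scale $n$, each ``restart point'' $y_j\=f^{\mc L_{j-1}+M_{n-1}+2}(\theta,x)$, $2\leq j\leq N$, again satisfies $(\mc B1)_n$, hence $(\mc B1)_{n-1}$, and, because $M_{n-1}+2<\Delta_j$, its first entrance into $\mc I_{n-1}$ occurs at the relative time $\Delta_j-M_{n-1}-2$. Applying the scale-$(n-1)$ claim to $(\theta,x)$ on the first block $[0,\mc L_1)$ and to each $y_j$ on the block $[\mc L_{j-1}+M_{n-1}+2,\mc L_j)$, and translating the latter back by the shift rule, one obtains
\begin{align*}
\mc P_k^{\mc L_1}(\theta,x)&\geq b_{n-1}(\mc L_1-k)\qquad(0\leq k<\mc L_1),\\
\mc P_{\mc L_{j-1}+M_{n-1}+2}^{\mc L_j}(\theta,x)&\geq b_{n-1}(\Delta_j-M_{n-1}-2)\qquad(j\geq2).
\end{align*}

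The key inequality turns the second estimate into $b_{n-1}(\Delta_j-M_{n-1}-2)\geq b_{n-1}(1-1/K_{n-1})\Delta_j=b_n\Delta_j$, and it likewise shows that a starting index $k$ lying in a ``bad'' block $[\mc L_{j-1},\mc L_{j-1}+M_{n-1}+2)$ still satisfies $\mc P_k^{\mc L_j}(\theta,x)\geq b_n(\mc L_j-k)$ (discard the bad-block count and use $\mc L_j-k\leq\Delta_j$). It remains to split $[k,\mc L_i)$ into the block $j_0$ containing $k$ and the full blocks $j_0+1,\ldots,i$; bounding each piece as above -- for $k$ in the good part of its block one first runs the scale-$(n-1)$ claim on $y_{j_0}$ with shifted index -- and adding up via additivity gives $\mc P_k^{\mc L_i}(\theta,x)\geq b_n(\mc L_i-k)$ for all $i$ and all $0\leq k<\mc L_i$, completing the induction.

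The main obstacle is organizational rather than conceptual: one must keep the scale-$n$ and scale-$(n-1)$ quantities cleanly separated, verify that each restart point $y_j$ is positioned so that the next visit of $\mc I_{n-1}$ is not overshot (this is exactly where $(\mc F1)_n$ and $M_{n-1}\geq2$ enter), and carry out the ``good block / bad block'' case distinction for an arbitrary starting index $k$. The only quantitative ingredient is the elementary inequality $M_{n-1}+2<\Delta_j/K_{n-1}$, i.e.\ that each correction window of length $M_{n-1}+2$ takes up at most a $1/K_{n-1}$-fraction of the block containing it -- which is precisely what produces the contraction factor $1-1/K_{n-1}$ in the recursion $b_n=(1-1/K_{n-1})b_{n-1}$.
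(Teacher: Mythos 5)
Your proposal is correct; note that the paper itself does not prove this lemma but imports it from the cited reference (Lemma~4.8 of \cite{fuhrmann2014}), whose argument is precisely the induction on scales you describe. Your use of the restart points $f^{\mc L_{j-1}+M_{n-1}+2}(\theta,x)$ from Lemma~\ref{lem: duration of stay in contracting/expanding regions}, the separation $\Delta_j>2K_{n-1}M_{n-1}$ from $(\mc F1)$, and the bookkeeping $M_{n-1}+2\leq \Delta_j/K_{n-1}$ producing the factor $1-1/K_{n-1}$ in $b_n=(1-1/K_{n-1})b_{n-1}$ reproduces that argument in essentially the same way.
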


Let $p\in \N$ and consider a finite orbit $\{(\theta_0,x),\ldots, f^n(\theta_0,x)\}$ which initially verifies $(\mc B1)_p$ and hits $\mc I_p$ only at $\theta_0+n\w$. Lemma~\ref{lem: estimate for times spent in contracting/expanding regions} provides us with a lower bound on the times spent in the contracting region between
any time $k$ and only such following times at which the orbit hits $\I_{p-1}$.
If we want a lower bound on the times in the contracting region between any two consecutive moments $k<l$, we have to deal with the fact that Lemma~\ref{lem: duration of stay in contracting/expanding regions} might allow the orbit to stay in the expanding region for $M_{p-1}+1$ times after hitting $\I_{p-1}$. This is taken care of in the following corollary of Lemma~\ref{lem: duration of stay in contracting/expanding regions} and Lemma~\ref{lem: estimate for times spent in contracting/expanding regions}.

For $\theta\in \T^1$ and $0\leq k \leq n$, set
\begin{align*}
p_k^n(\theta)=
\max\left\{p\in \N_{0}\:
\exists l \in \left[M_{p-1},\min\left\{n,n-k+M_{p}+1\right\}\right] \text{ such that }
\theta-l\w \in \I_p\right\}
\end{align*}
with $\max \emptyset \=-1$.

\begin{cor}[{cf. \cite[Corollary~5.4]{fuhrmanngrogerjager14}}]\label{cor: times in C}
Let $f\in \mc{V}_\omega$ and suppose
$(\theta-n\w,x)$ satisfies $(\B1)_{p_0^n(\theta)+1}$.
Then
\begin{align}\label{eq: times in contracting region arbitrary successive times}
\mc P_{k}^{n}(\theta-n\w,x)\geq b_{p_k^n(\theta)+1} \left(n-k-\sum_{j=0}^{p_k^n(\theta)} (M_{j}+2)\right)\quad \text{for each }k=0,\ldots,n-1.
\end{align}
\end{cor}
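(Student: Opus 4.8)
The plan is to prove \eqref{eq: times in contracting region arbitrary successive times} by induction on the scale $q\=p_k^n(\theta)$, combining the two preceding lemmas: Lemma~\ref{lem: estimate for times spent in contracting/expanding regions} will supply the factor $b_{q+1}$ over the bulk of the orbit segment, while Lemma~\ref{lem: duration of stay in contracting/expanding regions} will be used to ``restart'' the orbit -- in the sense of the condition $(\mc B1)$ -- after its most recent visit to $\I_q$, at the cost of at most $M_q+2$ indices. This is exactly the combinatorial scheme by which \cite[Corollary~5.4]{fuhrmanngrogerjager14} is deduced from its two input lemmas; since Lemma~\ref{lem: duration of stay in contracting/expanding regions} and Lemma~\ref{lem: estimate for times spent in contracting/expanding regions} here are the precise analogues of those, the argument transfers with merely notational changes, and I would only check that the relevant estimates line up.

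Before the induction I would record two reductions. Writing $(\theta_\ell,x_\ell)\=f^\ell(\theta-n\w,x)$, so that $\theta_\ell=\theta-(n-\ell)\w$, the window $[M_{p-1},\min\{n,n-k+M_p+1\}]$ in the definition of $p_k^n(\theta)$ can only shrink as $k$ grows, whence $p_0^n(\theta)\geq p_k^n(\theta)=q$; combined with the obvious implication $(\mc B1)_m\Rightarrow(\mc B1)_{m'}$ for $m'\leq m$, the hypothesis that $(\theta_0,x_0)$ verifies $(\mc B1)_{p_0^n(\theta)+1}$ gives $(\mc B1)_{q+1}$, and even $(\mc B1)_{q+2}$ as soon as $q<p_0^n(\theta)$. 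The latter is what is needed to push the analysis past a visit of the orbit to $\I_{q+1}$ that occurs before index $k$, via Lemma~\ref{lem: duration of stay in contracting/expanding regions} applied at scale $q+2$; when $q=p_0^n(\theta)$ the defining property of $p_0^n(\theta)$ confines every visit to $\I_{q+1}$ to the last $M_q$ indices, and there \eqref{eq: times in contracting region arbitrary successive times} is vacuous unless $n-k\geq M_q$. In both cases we are reduced to the situation in which the most recent visit of the orbit to $\I_q$, occurring at some index $m^*\leq n$ -- it exists, with $m^*\geq\max\{0,k-M_q-1\}$, because scale $q$ is admissible for $p_k^n(\theta)$ -- precedes the orbit's first visit to $\I_{q+1}$, so that $m^*$ is one of the times $\mc L_i$ in Lemma~\ref{lem: duration of stay in contracting/expanding regions} and Lemma~\ref{lem: estimate for times spent in contracting/expanding regions} applied at scale $q+1$.

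The base case $q=-1$ is immediate: after the above reduction the orbit no longer meets $\I_0$ between the (restarted) start and index $n$, and since $C$ is disjoint from $E=[e^-,e^+]$ while the orbit starts in $C$, \eqref{axiom: 2} propagates $x_\ell\in\textrm{int}(C)$, giving $\mc P_k^n(\theta-n\w,x)=n-k$, which is the claim. For the inductive step I would invoke Lemma~\ref{lem: estimate for times spent in contracting/expanding regions} at scale $q+1$ to get $\mc P_k^{m^*}(\theta-n\w,x)\geq b_{q+1}(m^*-k)$ and Lemma~\ref{lem: duration of stay in contracting/expanding regions} to see that $f^{m^*+M_q+2}(\theta-n\w,x)$ again verifies $(\mc B1)_{q+1}$. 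By maximality of $m^*$ the orbit does not meet $\I_q$ strictly between $m^*$ and $n$, so after discarding the $M_q+2$ intermediate indices the remaining segment of length $n'\=n-m^*-M_q-2$ admits no scale $\geq q$; the induction hypothesis applied to it (with base point $f^{m^*+M_q+2}(\theta-n\w,x)$ and using the obvious shift-covariance of $\mc P$), together with $b_q\geq b_{q+1}$ and the triviality of \eqref{eq: times in contracting region arbitrary successive times} when its right-hand side is non-positive, then yields $\mc P^n_{m^*+M_q+2}(\theta-n\w,x)\geq b_{q+1}\bigl(n'-\sum_{j=0}^{q-1}(M_j+2)\bigr)$. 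Since $\mc P_k^n\geq\mc P_k^{m^*}+\mc P^n_{m^*+M_q+2}$ and $\sum_{j=0}^q(M_j+2)=\sum_{j=0}^{q-1}(M_j+2)+(M_q+2)$, summing the two estimates produces \eqref{eq: times in contracting region arbitrary successive times}; the boundary cases $m^*<k$ and $m^*+M_q+2>n$ are treated identically, discarding at most $M_q+2$ indices at the relevant end and using $\sum_{j=0}^q(M_j+2)\geq M_q+2$.

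I expect the main obstacle to be the bookkeeping of the various offsets -- reconciling the windows $[M_{p-1},\min\{n,n-k+M_p+1\}]$ that define $p_k^n(\theta)$ with the length $M_q+1$ of a possible excursion of the orbit into $E$ following a visit to $\I_q$ (Lemma~\ref{lem: duration of stay in contracting/expanding regions}), and checking at every restart that the $(\mc B1)$-condition at the appropriate scale genuinely holds. These verifications use the separation properties $(\mc F1)_j$ and $(\mc F2)_j$ together with the super-exponential growth of $(M_j)_{j\in\N_0}$, and are carried out in full in \cite{fuhrmanngrogerjager14}, from where they can be imported essentially verbatim.
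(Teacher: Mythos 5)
The paper does not actually prove this corollary itself: it imports it verbatim from \cite[Corollary~5.4]{fuhrmanngrogerjager14} (see the sentence ``Their proofs can be found in \cite{fuhrmann2014,fuhrmanngrogerjager14}''), and your sketch reconstructs precisely the inductive scheme used there --- split the orbit at its last visit $m^*$ to $\I_q$, bound $\mc P_k^{m^*}$ by Lemma~\ref{lem: estimate for times spent in contracting/expanding regions}, restart the $(\mc B1)$-condition via Lemma~\ref{lem: duration of stay in contracting/expanding regions} at the cost of $M_q+2$ indices, and apply the induction hypothesis to the remaining segment, whose scale has dropped. The one genuinely delicate point, which you flag but do not carry out (visits to $\I_{q+1}$ occurring before index $k$, and the boundary cases $m^*<k$ and $m^*+M_q+2>n$), is exactly the bookkeeping the paper itself delegates to the cited reference, so within the scope of the paper your proposal matches its treatment.
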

We need one more combinatorial ingredient, in order to control $p_k^n(\theta)$.
Let us introduce $i_k^n:=\max\{l\:n-k\geq 2K_lM_l-M_l-1\}$ for $k, n \in \N$.
\begin{prop}[{cf. \cite[Proposition~5.5]{fuhrmanngrogerjager14}}]\label{prop: pk < ik}
Suppose $\theta \in \Omega_j$ for some $j\in \N$. Then $i_k^n\geq p_k^n(\theta)$ for all
$0\leq k\leq n- (2K_{j-1}M_{j-1}-M_{j-1}-1)$.
\end{prop}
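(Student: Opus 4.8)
The plan is to unwind the definitions and reduce the inequality $i_k^n\ge p_k^n(\theta)$ to a single numerical statement. Write $p:=p_k^n(\theta)$. The key preliminary observation is that the map $l\mapsto 2K_lM_l-M_l-1=M_l(2K_l-1)-1$ is strictly increasing in $l$: indeed $K_l=K_0\kappa^l$ with $\kappa\ge 2$ is strictly increasing and $(M_l)$ is super-exponentially increasing, hence so is their product. Consequently $\{l\in\N_0\:n-k\ge 2K_lM_l-M_l-1\}$ is a (possibly empty) initial segment of $\N_0$, whose maximum is $i_k^n$, and therefore $i_k^n\ge p$ is \emph{equivalent} to $n-k\ge 2K_pM_p-M_p-1$. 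Note also that the hypothesis $n-k\ge 2K_{j-1}M_{j-1}-M_{j-1}-1\ge 2K_0M_0-M_0-1$ already forces $i_k^n\ge 0$, so $i_k^n$ is well-defined; and if $p=-1$ there is nothing to prove. So from now on assume $p\ge 0$ and aim to verify $n-k\ge 2K_pM_p-M_p-1$.

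First I would treat the case $p\le j-1$. Here strict monotonicity of $l\mapsto M_l(2K_l-1)-1$ gives $2K_pM_p-M_p-1\le 2K_{j-1}M_{j-1}-M_{j-1}-1$, and the right-hand side is $\le n-k$ by the standing assumption on $k$; hence $i_k^n\ge p$. This is exactly the case in which the precise form of the admissible range $0\le k\le n-(2K_{j-1}M_{j-1}-M_{j-1}-1)$ is what makes the argument go through (it handles all the ``low'' levels $p\le j-1$ uniformly), and it uses nothing about $\theta$.

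The substantive case is $p\ge j$. By the very definition of $p=p_k^n(\theta)$ there is an integer $l$ with $M_{p-1}\le l\le\min\{n,\,n-k+M_p+1\}$ and $\theta-l\w\in\I_p$, i.e.\ $\theta\in\I_p+l\w$. Since $p\ge j$ and $\theta\in\Omega_j=\T^1\setminus\bigcup_{m\ge j}\bigcup_{l'=0}^{2K_mM_m}(\I_m+l'\w)$, the translate $\I_p+l'\w$ misses $\theta$ for every $l'\in\{0,1,\dots,2K_pM_p\}$; as $l\ge M_{p-1}\ge 0$ is an integer, this forces $l\ge 2K_pM_p+1$. Combining with $l\le n-k+M_p+1$ yields $n-k\ge l-M_p-1\ge 2K_pM_p-M_p\ge 2K_pM_p-M_p-1$, i.e.\ $i_k^n\ge p$ as desired. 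Together with the previous case this proves the proposition.

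I do not expect a genuine obstacle here: the proof is pure bookkeeping with the definitions, in the same spirit as \cite[Proposition~5.5]{fuhrmanngrogerjager14}. The only two points that require care are (i) recording that $\Omega_j$ excludes only the $\w$-translates of the sets $\I_m$ with index $m\ge j$, so the $\Omega_j$-argument is available precisely when $p=p_k^n(\theta)\ge j$, and (ii) checking that the threshold $n-k\ge 2K_{j-1}M_{j-1}-M_{j-1}-1$ is strong enough to absorb all levels $p\le j-1$ in one stroke, which is immediate from the monotonicity of $l\mapsto M_l(2K_l-1)-1$ established at the outset.
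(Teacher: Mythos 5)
Your proof is correct and takes essentially the same route as the paper's: the standing restriction on $k$ disposes of all levels $p\le j-1$ at once (the paper simply notes $i_k^n\ge j-1$ directly from the definition, whereas you invoke monotonicity of $l\mapsto 2K_lM_l-M_l-1$ — a cosmetic difference, and only the trivial direction of your claimed "equivalence" is actually needed), and in the substantive case $p\ge j$ both arguments use $\theta\in\Omega_j$ to force $l>2K_pM_p$ and hence $n-k>2K_pM_p-M_p-1$, i.e.\ $i_k^n\ge p$.
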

\begin{proof}
Note that by the assumptions $i_k^n\geq j-1$.
Thus, without loss of generality we may assume $p_k^n(\theta)> j-1$.
By definition of $p_k^n(\theta)$, there is $l\in \left[M_{p_k^n(\theta)-1},n-k+M_{p_k^n(\theta)}+1\right]$ such that $\theta-l\w \in \mc I_{p_k^n(\theta)}$.
Since $\theta \in \Omega_j$, this implies
$l>2K_{p_k^n(\theta)}M_{p_k^n(\theta)}$ and thus,
$n-k>2K_{p_k^n(\theta)}M_{p_k^n(\theta)}-M_{p_k^n(\theta)}-1$ which means
$i_k^n\geq p_k^n(\theta)$.
\end{proof}
As the SNA $\phi^+$ is attracting, we expect it to share a big intersection with the interval of contraction.
The next statement confirms this expectation.
\begin{prop}\label{prop: b}
 Consider a representative $\phi^+$ of the equivalence class of the SNA.
Then
\[
\textrm{Leb}_{\T^1}(\{\theta\:\phi^+(\theta)\notin E\})\geq
b-1/3.
\]
\end{prop}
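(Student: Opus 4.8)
The plan is to exploit the contraction estimate \eqref{eq: lipschitz x in C} together with the definition of the interval of contraction to show that the set where $\phi^+$ escapes the expanding region must be large in measure, arguing essentially by a Birkhoff-type averaging along the invariant graph. The key observation is \eqref{axiom: 2}: if $\theta\notin\I_0$ and $\phi^+(\theta-\w)\notin(e^-,e^+)=:\,$int-complement of $E$, then $f_{\theta-\w}(\phi^+(\theta-\w))\in\operatorname{int}(C)$, and so $\phi^+(\theta)=f_{\theta-\w}(\phi^+(\theta-\w))\in C$, whence in particular $\phi^+(\theta)\notin E$ (as $C$ and $E$ are disjoint). Thus, up to the small set $\I_0$, the preimage under $\theta\mapsto\theta-\w$ of $\{\phi^+\notin E\}$ contains $\{\phi^+\notin E\}$, which by invariance of Lebesgue measure under rotation already forces some control. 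To get the quantitative bound $b-1/3$, I would instead track the fraction of an orbit segment spent in $C$.

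Concretely, I would use Lemma~\ref{lem: estimate for times spent in contracting/expanding regions} (or rather its corollary, Corollary~\ref{cor: times in C}). Fix a generic $\theta$ for which $(\theta-n\w,\phi^+(\theta-n\w))$ satisfies the hypothesis $(\mc B1)_{p_0^n(\theta)+1}$ for a sequence of $n\to\infty$ — such $\theta$ exist for a full-measure set, since $\phi^+\in C$ on a set of positive measure (as $\lam(\phi^+)<0$) and since the bad sets $\mc Z^-_{n}$ have controlled measure via \eqref{eq: measure of Omegaj}-type estimates. For such $\theta$ and large $n$, Corollary~\ref{cor: times in C} with $k=0$ gives
\[
\mc P_0^n(\theta-n\w,\phi^+(\theta-n\w))\ \geq\ b_{p_0^n(\theta)+1}\Bigl(n-\sum_{j=0}^{p_0^n(\theta)}(M_j+2)\Bigr).
\]
Since $\mc P_0^n$ counts indices $\ell\in[0,n-1]$ with $f_{\theta-n\w}^\ell(\phi^+(\theta-n\w))=\phi^+(\theta-n\w+\ell\w)\in C$ and $\theta-n\w+\ell\w\notin\I_0$, and since $C\cap E=\emptyset$, this is a lower bound on the number of $\ell$ in the window with $\phi^+(\theta-(n-\ell)\w)\notin E$. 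Dividing by $n$ and letting $n\to\infty$ along the chosen subsequence, the correction term $\tfrac1n\sum_{j\le p_0^n}(M_j+2)$ tends to $0$ (because $M$ grows super-exponentially while $p_0^n(\theta)$ grows only logarithmically — this is exactly the content of Proposition~\ref{prop: pk < ik} controlling $p_k^n$ by $i_k^n$, which is $O(\log n)$), and $b_{p_0^n+1}\to b$. Hence the asymptotic lower density of $\{\ell\le n:\phi^+(\theta-(n-\ell)\w)\notin E\ \text{or}\ \theta-(n-\ell)\w\in\I_0\}$ is at least $b$; removing the at-most-$\mc N\eps_0<1/16<1/3-(b-1/3)$ contribution from $\I_0$ leaves density at least $b-1/16$. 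By the ergodic theorem applied to the indicator $\ind_{\{\phi^+\notin E\}}$ along the irrational rotation (or directly by unique ergodicity of the rotation and a Fatou/averaging argument on these densities), $\textrm{Leb}_{\T^1}(\{\theta:\phi^+(\theta)\notin E\})\geq b-1/16\geq b-1/3$.

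\textbf{The main obstacle} I anticipate is the rigorous passage from the orbit-segment density bound to the measure statement, i.e.\ justifying that the lower density computed for a.e.\ $\theta$ along a specific subsequence $n_\ell$ actually transfers to a lower bound on $\textrm{Leb}_{\T^1}(\{\phi^+\notin E\})$. The clean way is: let $A=\{\theta:\phi^+(\theta)\notin E\}$ and note $\theta+\ell\w\in A\cup\I_0$ whenever the orbit point $\phi^+(\theta+\ell\w)\in C$ with $\theta+\ell\w\notin\I_0$, so $\mc P_0^n(\cdot)\le\#\{\ell\in[0,n): \theta-n\w+\ell\w\in A\cup\I_0\}$; by Birkhoff for the (uniquely) ergodic rotation, $\tfrac1n\#\{\ell<n:\theta'+\ell\w\in A\cup\I_0\}\to\textrm{Leb}_{\T^1}(A\cup\I_0)$ for \emph{every} $\theta'$, uniformly. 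Combining with the lower bound on $\tfrac1n\mc P_0^n$ gives $\textrm{Leb}_{\T^1}(A)+\textrm{Leb}_{\T^1}(\I_0)\ge b$, hence $\textrm{Leb}_{\T^1}(A)\ge b-\mc N\eps_0\ge b-1/16\ge b-1/3$. The remaining care is merely in verifying that the hypothesis $(\mc B1)_{p_0^n+1}$ holds for the relevant $\theta$ and infinitely many $n$ — which follows from $\textrm{Leb}_{\T^1}(\{\phi^+\in C\})>0$ together with the summability estimate \eqref{estimate-eps-m} ensuring $\theta\notin\mc Z^-_{n-1}$ for a positive-measure set of recurrence instants — and in bounding $p_0^n(\theta)$ via $i_0^n$ from Proposition~\ref{prop: pk < ik} after intersecting with some $\Omega_j$, which is harmless since these have measure $\to1$.
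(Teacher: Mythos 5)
Your proposal is essentially correct, but it takes a genuinely different route from the paper. The paper never estimates visit frequencies along the graph: it picks a single point $(\theta,x)\in\bigcap_{n}\textrm{cl}\left(f^{M_n}(\mathcal A_n)\right)$ (non-empty because the critical sets are non-void and nested via $(\mathcal F2)_n$), uses Lemma~\ref{lem: estimate for times spent in contracting/expanding regions} to bound the backward derivatives $\partial_x f_\theta^{-k}(x)\geq \alpha_-^{-k}$ with $\alpha_-=\alpha^{-(3b-2)}$, and then invokes the semi-uniform ergodic theorem of Sturman--Stark for $f^{-1}$, together with the fact that $\mu_{\phi^\pm}$ are the only ergodic measures, to upgrade this single-point estimate to $\lambda(\phi^+)\leq\log\alpha_-$; the measure bound then follows by splitting the Lyapunov integral with \eqref{eq: lipschitz x} and \eqref{eq: lipschitz x in E}, which gives exactly $\textrm{Leb}_{\T^1}(\{\phi^+\notin E\})\geq b-1/3$. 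You instead run the combinatorial machinery (Corollary~\ref{cor: times in C}, Proposition~\ref{prop: pk < ik}) directly on orbits of the graph and transfer the resulting visit-frequency bound to a measure bound via Birkhoff. This buys a stronger conclusion (asymptotically a bound close to $b$, and in any case at least $b^2>b-1/3$; note that $\mc P$ already excludes the times in $\I_0$, so you need not even subtract $\mc N\eps_0$), at the price of having to establish the recurrence hypothesis $(\mathcal B1)_{p_0^{n}(\theta)+1}$ along a subsequence of times -- which is precisely the mechanism the paper reserves for the proof of Proposition~\ref{prop: decomposition in lipschitz graphs}, where it can already invoke Proposition~\ref{prop: b}.

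Two points in your write-up need repair, both fixable. First, unique ergodicity of the rotation yields uniform, everywhere convergence of Birkhoff averages only for continuous observables; $A=\{\theta\:\phi^+(\theta)\notin E\}$ is merely measurable (the graph has no continuous representative), so the claimed convergence ``for every $\theta'$, uniformly'' is false. The a.e.\ Birkhoff theorem suffices, though: you only need one $\theta$ lying simultaneously in the Birkhoff-good full-measure set, in some $\Omega_j$, and returning infinitely often to the good starting set. Second, $\lambda(\phi^+)<0$ does not by itself force $\phi^+\in C$ on a positive-measure set, since the fibre derivative may be smaller than $1$ outside $C\cup E$ as well; but it does force $\textrm{Leb}_{\T^1}(A)>1/3$ via \eqref{eq: lipschitz x} and \eqref{eq: lipschitz x in E}, and then \eqref{axiom: 2} together with $\textrm{Leb}_{\T^1}(\I_0)<1/16$ and \eqref{estimate-eps-m} pushes a positive-measure portion of the graph into $\textrm{int}(C)$ off the set $\mc Z^-_\infty$ at the next step, which is exactly what you need to start the recurrence and also shows your scheme is not circular.
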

\begin{proof}
Since all critical sets $\mathcal I_n$ are non-void, the same is true for the sets $\textrm{cl}\left(f^{M_n}(\mathcal A_n)\right)$ (cf. Definition~\ref{defn: critical sets}).
As a consequence of Lemma \ref{lem: duration of stay in contracting/expanding regions} 
and $(\mathcal F2)_n$, the latter form a nested sequence of compact sets such that their intersection is non-void as well.
Let $(\theta,x)\in\bigcap_{n\in\N}\textrm{cl}\left(f^{M_n}(\mathcal A_n)\right)$. Then the point $(\theta',x')\=f^{-M_n}(\theta,x)$ satisfies $(\mathcal B1)_n$ and 
$f_{\theta'}^{M_n-1}(x')\in C$ by Lemma \ref{lem: duration of stay in contracting/expanding regions}. Hence, for any $k\in[0,M_n]$ we have 
\begin{eqnarray*}
\partial_x f_{\theta}^{-k}(x)&=&\frac{1}{\partial_x f_{\theta-k\omega}^{k}(f_\theta^{-k}(x))}=\frac{1}{\prod_{j=\mathcal L-k+1}^{\mathcal L}\partial_x f_{\theta'+j\omega}(f_{\theta'}^j(x'))}\\
&\stackrel{(\ref{eq: lipschitz x}), (\ref{eq: lipschitz x in C})}{\geq}& \alpha\cdot \alpha^{\mathcal P_{\mathcal L-k+1}^{\mathcal L}(\theta',x')}
\alpha^{-2(k-1-\mathcal P_{\mathcal L-k+1}^{\mathcal L}(\theta',x'))}\stackrel{\text{Lemma~\ref{lem: estimate for times spent in contracting/expanding regions}}}{\geq}
\alpha_-^{-k},
\end{eqnarray*}
where $\mathcal L=M_n-1$ and $\alpha_-=\alpha^{-(3b-2)}<1$. As $M_n\nearrow \infty$, 
 the point $(\theta,x)$ verifies
\[
 \lambda^-(\theta,x):=\limsup_{k\to\infty} 1/k\cdot \log \d_x f^{-k}_\theta(x)\geq-\log \alpha_-,
\]
where $\lambda^-(\theta,x)$ is the \emph{backwards Lyapunov} exponent of the point $(\theta,x)$.
Now, by the Semi-uniform Birkhoff Ergodic Theorem (see \cite[Theorem~1.9]{sturmanstark2000}) we know that if the Lyapunov exponents for all 
invariant measures (which, in the present situation, are given by the Lyapunov exponents of the invariant graphs)
are smaller than a constant $a$, then all pointwise Lyapunov exponents are uniformly bounded below $a$.
By the definition of $\mc V_\w$, $\phi^+$ gives rise to the only invariant ergodic measure with a negative Lyapunov exponent so that this 
observation--applied to the inverse map $f^{-1}$--yields $\lam(\phi^+)\leq\log \alpha_-$. 
Due to \eqref{eq: lipschitz x} and \eqref{eq: lipschitz x in E}, this gives
\[
\textrm{Leb}_{\T^1}(\{\theta\:\phi^+(\theta)\notin E\}) \log \alpha^{-2}+(1-\textrm{Leb}_{\T^1}(\{\theta\:\phi^+(\theta)\notin E\})) \log \alpha\leq \log \alpha_-,
\]
proving the statement.
\end{proof}
In the following, let $\mc M\ssq\T^1$ comprise those $\theta$ whose backwards orbits (under $R_\w$) visit at least one of the sets
$\bigcup_{n\in\N} \bigcup_{j=0}^{M_n} \mc I_n-j\w$ and ${{\phi^+}^{\vphantom{-}}}^{-1}(E)$ with a frequency different from the respective
Lebesgue measure.
Observe that Birkhoff's Ergodic Theorem implies that $\mc M$ is a $\textrm{Leb}_{\T^1}$-null set.

\begin{proof}[Proof of Proposition~\ref{prop: decomposition in lipschitz graphs}]
For this proof, we refer by $|I|$ to the length (and in contrast to the previous convention not to the diameter)
of subsets $I\ssq\T^1$.
Let $\theta,\theta' \in \Omega_j\setminus \mc M$ and assume without loss of generality that $d(\theta,\theta')<|E|/(4S)$.
Note that there is a strictly increasing sequence $(\tilde n_\ell)$ such that
$\theta-\tilde n_\ell\w,\theta'-\tilde n_\ell\w \notin \bigcup_{n\in\N} \bigcup_{m=0}^{M_n} \I_n-m\w$ as well as $\phi^+(\theta-\tilde n_\ell\w),\phi^+(\theta'-\tilde n_\ell\w) \notin E$ because
\begin{align*}
&\lim_{m\to\infty} \frac{1}{m} \sum_{\ell=0}^{m-1}\left( \mathbf{1}_{\bigcup_{n\in\N} \bigcup_{j=0}^{M_n} \mc I_n-j\w}(\theta-\ell\w)+
  \mathbf{1}_{\bigcup_{n\in\N} \bigcup_{j=0}^{M_n} \mc I_n-j\w}(\theta'-\ell\w)\right.\\
  &  \phantom{\lim_{m\to\infty} \frac{1}{m} \sum_{\ell=0}^{m-1}}+  \mathbf{1}_{{{\phi^+}^{\vphantom{i}}}^{-1}(E)}(\theta-\ell\w)+ \mathbf{1}_{{{\phi^+}^{\vphantom{i}}}^{-1}(E)}(\theta'-\ell\w) \bigg)\\
 &=2 \cdot \textrm{Leb}_{\T^1}\left(\bigcup_{n\in\N} \bigcup_{\ell=0}^{M_n} \mc I_n-\ell\w\right)+ 2\cdot \textrm{Leb}_{\T^1}\left({{\phi^+}^{\vphantom{i}}}^{-1}(E)\right)<1,
\end{align*}
where we used \ref{estimate-eps-m} and Proposition~\ref{prop: b} in the last step.
Given such $\tilde n_\ell$, observe that
$\theta-(\tilde n_\ell-1)\w,\theta'-(\tilde n_\ell-1)\w \notin \bigcup_{n\in\N} \mc Z_n^{-}$ as well as $\phi^+(\theta-(\tilde n_\ell-1)\w),\phi^+(\theta'-(\tilde n_\ell-1)\w) \in C$, 
due to \eqref{axiom: 2}.
We set $n_\ell\=\tilde n_\ell-1$ and hence have that $(\theta-n_\ell\w,\phi^+(\theta-n_\ell\w))$ and $(\theta'-n_\ell\w,\phi^+(\theta'-n_\ell \omega))$ satisfy $(\mc B1)_{p_0^{n_\ell}(\theta)+1}$ and $(\mc B1)_{p_0^{n_\ell}(\theta')+1}$ respectively.

By Corollary~\ref{cor: times in C} and Proposition~\ref{prop: pk < ik}, we thus get
\begin{align}\label{eq: 20}
\begin{split}
\mc P_{k}^{n_\ell}(\theta-n_\ell\w,\phi^+(\theta-n_\ell\w))\geq
&b_{p_k^{n_\ell}(\theta)+1} \left(n_\ell-k-\sum_{m=0}^{p_k^{n_\ell}(\theta)} (M_{m}+2)\right)\\
\stackrel{\text{Proposition~\ref{prop: pk < ik}}}{\geq}
&b_{i_k^{n_\ell}+1} \left(n_\ell-k-\sum_{m=0}^{i_k^{n_\ell}} (M_{m}+2)\right),
\end{split}
\end{align}
for $0\leq k\leq n_\ell - (2K_{j-1}M_{j-1}-M_{j-1}-1)$.

Without loss of generality, we may assume that $j$ is large enough so that $\sum_{m=0}^{i_k^{n_\ell}} (M_{m}+2)\leq \frac{3}{2} M_{i_k^{n_\ell}}$
(note that $i_k^{n_\ell}\geq j-1$). Further, $({n_\ell}-k)/K_{i_k^{n_\ell}}\geq
2M_{i_k^{n_\ell}}-M_{i_k^{n_\ell}}/K_{i_k^{n_\ell}}-1/K_{i_k^{n_\ell}}$ by definition of
$i_k^{n_\ell}$.
Thus, we
have $\sum_{m=0}^{i_k^{n_\ell}} (M_{m}+2)\leq ({n_\ell}-k)/K_{i_k^{n_\ell}}$ and so by \eqref{eq: 20}
\begin{align}\label{eq: P n-k n}
 \mc P_{k}^{{n_\ell}}(\theta-{n_\ell}\w,\phi^+(\theta-n_\ell\w))\geq b_{i^{n_\ell}_k+1} (1-1/K_{i_k^{n_\ell}}) ({n_\ell}-k) > b^2({n_\ell}-k).
\end{align}
A similar estimate holds true for $\theta'$.

Now, given $\theta,\theta'\in \T^1$ and $n\in\N_0$, set
\begin{align*}
\wp^n(\theta,\theta')=
\#\left\{-1\leq m <n-1\ \left|\ \phi^+(\theta+m\w),\phi^+(\theta'+m\w) \in C\text{ and } \theta+m\w, \theta'+m\w \notin \I_0 \right.\right\}
\end{align*}
and observe that if $\wp^1(\theta,\theta')=1$, then both $\phi^+(\theta)$ and $\phi^+(\theta')$ lie in $C$ due to \eqref{axiom: 2}.
By induction on $n$, we next show that for all $n\in\N$
\begin{align}\label{eq: induction step n}
\begin{split}
d\left(\phi^+(\theta+n\w),\phi^+(\theta'+n\w)\right)&\leq
\alpha^{2n-3\wp^n(\theta,\theta')}
d\left(\phi^+(\theta),\phi^+(\theta')\right)
\\
&\phantom{\leq}+
S d(\theta,\theta') \sum_{k=1}^{n} \alpha^{2(n-k)-3\wp^{n-k}(\theta+k\w,\theta'+k\w)}.
\end{split}
\end{align}
First, we get
\begin{align}\label{eq: induction step 1}
\begin{split}
d\left(\phi^+(\theta+\w),\phi^+(\theta'+\w)\right)&\leq
d\left(f_{\theta}(\phi^+(\theta)),f_{\theta}(\phi^+(\theta'))\right)+
d\left(f_{\theta}(\phi^+(\theta')),f_{\theta'}(\phi^+(\theta'))\right)\\
&\leq
 \alpha^{2 \left(1-\wp^1(\theta,\theta')\right)-\wp^1(\theta,\theta')} d\left(\phi^+(\theta),\phi^+(\theta')\right)
+
S d(\theta,\theta').
\end{split}
\end{align}
To see this, we may assume without loss of generality that $\wp^1(\theta,\theta')=1$.
Then,
$\phi^+(\theta-\w)$ and $\phi^+(\theta'-\w)$ as well as $\phi^+(\theta)$ and $\phi^+(\theta')$ lie in $C$. 
Denote by $I'$ the line segment entirely contained in $C$ which connects $\phi^+(\theta-\w)$ and $\phi^+(\theta'-\w)$.\footnote{Note that
the length of $I'$ may not coincide with the distance of $\phi^+(\theta-\w)$ and $\phi^+(\theta'-\w)$
in $\T^1$.}
We have that $f_{\theta-\w}(I')\ssq C$ [due to \eqref{axiom: 2}] and $|f_{\theta-\omega}(I')|\leq \alpha^{-1} |C| <|C|/4$
[due to \eqref{eq: lipschitz x in C}].
If we denote by $I\ssq C$ that line segment which connects $\phi^+(\theta)$ and $\phi^+(\theta')$, observe that
$I$ is contained in an $|E|/4$-neighbourhood of $f_{\theta-\omega}(I')$
since 
\[
 d\left(f_{\theta-\w}(\phi^+(\theta'-\w)),\phi^+(\theta')\right)=d\left(f_{\theta-\w}(\phi^+(\theta'-\w)),f_{\theta'-\w}(\phi^+(\theta'-\w))\right)\leq S d(\theta,\theta')<|E|/4.
\]
In particular, this implies $|I|<1/2$ so that $d(f_\theta(\phi^+(\theta)), f_{\theta}(\phi^+(\theta')))\leq \alpha^{-1}d(\phi^+(\theta),\phi^+(\theta'))$
due to \eqref{eq: lipschitz x in C} which proves \eqref{eq: induction step 1}.

Note that (\ref{eq: induction step 1}) coincides with \eqref{eq: induction step n} for $n=1$.
Now, suppose \eqref{eq: induction step n} holds for some $n\in \N$.
Since $\wp^n(\theta,\theta')+\wp^{1}(\theta+n\w,\theta'+n\w)=\wp^{n+1}(\theta,\theta')$, we have
\begin{align*}
& d\left(\phi^+(\theta+(n+1)\w),\phi^+(\theta'+(n+1)\w)\right)\\
&=
d\left(f_{\theta+n\w}\left(\phi^+(\theta+n\w)\right),f_{\theta'+n\w}\left(\phi^+(\theta'+n\w)\right)\right)\\
&\leq
\alpha^{2\left(1-\wp^{1}(\theta+n\w,\theta'+n\w)\right)-\wp^{1}(\theta+n\w,\theta'+n\w)}
d\left(\phi^+(\theta+n\w),\phi^+(\theta'+n\w)\right)+Sd\left(\theta,\theta'\right)\\
&\leq
\alpha^{2(n+1)-3\wp^{n+1}(\theta,\theta')}
d\left(\phi^+(\theta),\phi^+(\theta')\right)
+ S d(\theta,\theta') \sum_{k=1}^{n+1}
\alpha^{2(n+1-k)-3\wp^{n+1-k}(\theta+k\w,\theta'+k\w)}
\end{align*}
where we used a similar argument as for \eqref{eq: induction step 1} and the induction hypothesis.
Hence, equation \eqref{eq: induction step n} holds.

Now, consider sufficiently large $j$ and $\theta,\theta'\in \Omega_j\setminus \mc M$ as above.
Suppose $n_\ell >2K_{j-1}M_{j-1}-M_{j-1}-1$ and observe that
equation \eqref{eq: P n-k n} gives
\begin{align*}
&\wp^{n_\ell-k}(\theta-(n_\ell-k)\w,\theta'-(n_\ell-k)\w)\\
&\geq n_\ell-k-\left(2(n_\ell-k)-\mc P_{k}^{n_\ell}(\theta-n_\ell\w)-\mc P_{k}^{n_\ell}(\theta'-n_\ell\w)\right)-2\\
& \geq
n_\ell-k-2(1-b^2)(n_\ell-k)-2=(2b^2-1)(n_\ell-k)-2
\end{align*}
for all $k=0,\ldots,n_\ell - (2K_{j-1}M_{j-1}-M_{j-1}-1)$.
Plugging this into (\ref{eq: induction step n}) and sending $\ell\to\infty$ yields
$|\phi^+(\theta)-\phi^+(\theta')|\leq L_j d(\theta,\theta')$
where
\begin{align}\label{eq: L-j upper bound}
L_j=S \sum_{k=2K_{j-1}M_{j-1}-M_{j-1}-1}^\infty \alpha^{6-c_0k}+S\sum_{k=0}^{2K_{j-1}M_{j-1}-M_{j-1}-2}\alpha^{2k}<\infty,
\end{align}
with $c_0=6b^2-5>0$.
\qedhere
\end{proof}

\bibliography{Literaturnachweis_SNA}{}
\bibliographystyle{unsrt}
\end{document}